\documentclass[a4paper,10pt]{siamonline0516}
\usepackage[english]{babel}
\usepackage{amsmath,amssymb}           
\usepackage{graphicx}        
\usepackage{epsfig,epstopdf}   

\newtheorem{thm}{Theorem}
\newtheorem{lem}[thm]{Lemma}

\newtheorem{rem}[thm]{Remark}
\newtheorem{defn}[thm]{Definition}

\newcommand{\R}{{\mathbb R}}

\newcommand{\argmin}{\mathop{\rm argmin}}%

\newcommand{\cO}{{\mathcal O}}

\newcommand{\cN}{{\mathcal N}}

\newcommand{\bE}{{\mathbf E}}

\newcommand{\eps}{\varepsilon}
\newcommand{\KL}{{\it KL}}

\newcommand{\wrt}{with respect to }


\begin{document}

\title{Stochastic gradient descent and fast relaxation to thermodynamic equilibrium:\\ a stochastic control approach}

\author{Tobias Breiten, Carsten Hartmann{\thanks{Institut f\"ur Mathematik, BTU Cottbus-Senftenberg, Cottbus, Germany, e-mail: \email{carsten.hartmann@b-tu.de}}}, Lara Neureither, Upanshu Sharma}

\date{\today}

\maketitle

\begin{abstract}

We study the convergence to equilibrium of an underdamped Langevin equation that is controlled by a linear feedback force. Specifically, we are interested in sampling the possibly multimodal invariant probability distribution of a Langevin system at small noise (or low temperature), for which the dynamics can easily get trapped inside metastable subsets of the phase space. We follow [Chen et al., J. Math. Phys. {\bf 56}, 113302, 2015] and consider a Langevin equation that is simulated at a high temperature, with the control playing the role of a friction that balances the additional noise so as to restore the original invariant measure at a lower temperature. We discuss different limits as the temperature ratio goes to infinity and prove convergence to a limit dynamics. It turns out that, depending on whether the lower (``target'') or the higher (``simulation'') temperature is fixed, the controlled dynamics converges either to the overdamped Langevin equation or to a deterministic gradient flow. This implies that (a) the ergodic limit and the large temperature separation limit do not commute in general, and that (b) it is not possible to accelerate the speed of convergence to the ergodic limit by making the temperature separation larger and larger. We discuss the implications of these observation from the perspective of stochastic optimisation algorithms and enhanced sampling schemes in molecular dynamics.        

\end{abstract}

\section{Introduction}

Methods to accelerate the convergence of Monte Carlo algorithms that sample the equilibrium distribution of a possibly high-dimensional system have been around since at least the invention of the Metropolis algorithm \cite{Robert2018}. Yet, they have recently enjoyed an enormous boost in attention, partly driven by the latest developments in machine learning \cite{Ma2019,ZhangRethink2017} and theoretical advances in hypocoercive systems \cite{Kontis2016,Villani2009}. Depending on the scientific community, accelerated sampling algorithms come under various different names, e.g.\ enhanced sampling  \cite{enhancedSampling}, smart sampling \cite{smartSampling}, guided sampling \cite{guidedSampling}, to mention just a few. They come into play when the target probability distribution is not just  high-dimensional, but also multimodal, which is a challenge for any Monte Carlo algorithm \cite{mengersen1996}. Similar issues arise in machine learning and the (stochastic) optimisation of high-dimensional, non-convex functions, and it has been argued that Monte Carlo sampling can improve the training of large-scale   networks by reducing overfitting and accelerating convergence \cite{adLaLa2019,Ma2019}.    

A model for Monte Carlo simulations that is popular in statistical mechanics and that shares features of Brownian dynamics and ballistic motion dynamics is the underdamped Langevin model \cite{Betancourt2018}. The underdamped Langevin dynamics is a second-order stochastic differential equation (SDE) and is based on Hamilton's equations perturbed by friction and noise. Besides the physical appeal of the model, it has been conjectured that the underdamped Langevin dynamics shows superior convergence over its first-order counterpart, the so-called \emph{overdamped} Langevin equation, and this conjecture has been recently proved using the 2-Wasserstein distance \cite{Cheng2018}. The fact that the (underdamped) Langevin equation is a second-order SDE makes it harder to analyse from a theoretical point of view, because the underlying infinitesimal generator is degenerate and non-symmetric. However, it is precisely the degeneracy and the lack of symmetry that offers the possibility to tweak the dynamics by changing the drift in a way that the convergence to equilibrium is accelerated without altering the invariant measure \cite{Duncan2017,Eberle2019}; see also \cite{Hwang2005,Lelievre2013,ReyBellet2015} for related ideas. 

In view of the aforementioned, two recent developments are relevant: sampling methods based on either optimal control or mean field approaches \cite{Hu2019,Hu2020}, and the application of Monte Carlo methods from statistical mechanics---especially molecular dynamics---to problems in machine learning or Bayesian inference  \cite{Girolami2011,Neal2012}. 
Some of the modern stochastic optimisation methods from machine learning, like ADAM, AdaGrad or RMSProp adaptively control the learning rate so as to improve the convergence to a local minimum, but they also share many features with adaptive versions of the Langevin equation \cite{Goodfellow2015,adLaLa2019}. 
It therefore comes as no surprise that the Langevin model is the basis for the stochastic modified equation approach that can be used to analyse state of the art momentum-based stochastic optimisation algorithms like ADAM \cite{An2019,SME2019}.
The common feature of all these algorithms is that they are designed to adaptively sample only  relevant parts of a complicated energy landscape or loss function in order to accelerate convergence to equilibrium (i.e.~to an equilibrium distribution or an approximation of the global optimum). 
Control theory plays a key role here in that many of the aforementioned adaptive algorithms can be interpreted and analysed as solutions to stochastic optimal control problems \cite{Ferre2018,Hartmann2019,IHP2019}; cf.~also \cite{Mitter2003}.\\ 

In this paper, we take up an idea that was put forward in \cite{Pavon2015} and that is reminiscent of the Schr\"odinger bridge approach (e.g.~\cite{daipra1991,Reich2019}). Inspired by recent ideas \cite{Pavon2016a,Pavon2016b} for solving the Schr\"odinger bridge problem for a linear SDE, the idea here  is to augment the (nonlinear) Langevin equation by a linear feedback control that plays the role of bridging the equilibrium distribution of an uncontrolled Langevin equation at a high (``simulation'') temperature to a non-equilibrium target probability measure at a lower (``target'') temperature on an infinite time horizon. The control target (i.e.~the non-equilibrium invariant measure) is reached asymptotically in the long time limit by letting the control act as a friction force that balances the additional noise in the high-temperature Langevin equation so as to restore the invariant measure at the target temperature. The approach rests on standard arguments from equilibrium statistical mechanics (specifically: fluctuation-dissipation relations), combined with functional inequalities (specifically: FIR inequalities) from which an ergodic optimal control problem is derived. The latter leads to the minimum dissipation approach of~\cite{Pavon2015}.  

Adopting a slightly different perspective, the control approach can be thought of as a non-asymptotic version of simulated annealing, in which the control target is to efficiently sample the modes of a probability density without getting trapped in local modes too often. The question of finding good temperature protocols in simulated annealing is intimately related to the question of accelerating the speed of convergence towards the stationary distribution by optimising the spectral gap of the underlying generator \cite{Holley1989,Loewe1996}. Having these two sides of a coin, it is thus natural to look at the limit as the ratio between the simulation temperature and the target temperature goes to infinity. It turns out that, depending on whether the target or the simulation temperature is fixed, the controlled dynamics converges either to the overdamped Langevin equation or to a deterministic gradient flow. This implies that the ergodic limit and the large temperature separation limit do not commute in general. As a consequence, the controlled Langevin dynamics interpolates between sampling and stochastic gradient descent (under an appropriate temperature scaling), and this fact can be used to devise algorithms for non-convex optimisation problems. On the other hand, it implies that it is not possible to accelerate the system indefinitely by increasing the simulation temperature under the constraint of asymptotically reaching the stationary distribution, which is in agreement with related  results in \cite{Duncan2017,Eberle2019}.

The article is outlined as follows. In Section \ref{sec:langevin}, we introduce the  Langevin model with linear feedback controls and derive optimality conditions for the target invariant measure. The large temperature limits of the model are studied in Section \ref{sec:limits}. All theoretical findings are discussed and illustrated with numerical examples in Section \ref{sec:numerics}. We conclude with a short discussion in Section \ref{sec:discussion}. The article contains an appendix that records various technical identities, derivations and proofs.

\section{High-temperature Langevin dynamics}\label{sec:langevin}

Let $(X,Y)=(X_{t},Y_{t})_{t\ge 0}$ be the solution of the controlled Langevin equation
\begin{equation}\label{langevinSDE}
\begin{aligned}
dX_{t} & = Y_{t}\,dt\,,\quad X_{0}=x\\
dY_{t} & = \left(\sigma u_{t} -\nabla V(X_{t}) - \gamma Y_{t} \right)dt + \sigma dW_{t}\,,\quad Y_{0}=y\,
\end{aligned}
\end{equation}
on the phase space $\R^{n}\times\R^{n}$. Here $V\colon\R^{n}\to\R$ is a smooth potential energy function that is bounded from below and coercive, $W_t$ is a standard Brownian motion in $\R^n$,  $\gamma\in\R^{n\times n}$ is a symmetric positive definite matrix, and the noise coefficient $\sigma\in\R^{n\times n}$ satisfies the fluctuation-dissipation relation
\begin{equation}\label{FDR}
2\gamma = \bar{\beta}\sigma\sigma^{\top}\,
\end{equation}
for some $\bar{\beta}>0$. The control $u$ assumes values in $\R^{n}$ and will be specified below; for $u\equiv 0$ the dynamics admits the unique invariant measure  (omitting normalisation constants here and in what follows)
\begin{equation}\label{invMeasure}
\bar{\mu}_{\infty}(dx,dy) = \exp\left(-\bar{\beta} H(x,y)\right)dx dy\,,
\end{equation}
with the Hamiltonian
\begin{equation}\label{H}
H\colon\R^{n}\times\R^{n}\to\R\,, \quad H(x,y) = \frac{1}{2}|y|^{2} + V(x)\,.
\end{equation}

We call $1/\bar{\beta}$ the ``simulation temperature''. Our aim is to simulate (\ref{langevinSDE}) at inverse temperature $\bar{\beta}$, and choose a control so that the dynamics converges to a probability distribution $\mu_\infty$ at a lower temperature $1/\beta$ that we call the ``target temperature''. We follow the approach proposed in \cite{Pavon2015}, and consider controls of the form $u_{t}=B^{\top}Y_{t}$ for some suitable matrix $B\in\R^{n\times n}$. The following Lemma characterises the admissible matrices $B$ that achieve the aforementioned aim. 

In what follows the operators $\nabla_x,\nabla_y$ are the gradients and $\nabla^2_y$ is the Hessian with respect to the variable in the subscript, where $x,y\in\mathbb R^n$. For two matrices $D,C$, we use $D\colon C=\rm{tr}(D^\top C)$. 
\begin{lem}\label{lem:invMeasure2}
Let $\beta, \bar{\beta}>0$ and $u_{t}=B^{\top}Y_{t}$ for some $B\in\R^{n\times n}$. Then $\mu_{\infty}$ with density $\rho_{\infty}\propto \exp(-\beta H)$ is invariant under (\ref{langevinSDE}) if and only if  
\begin{equation}\label{FDR2}
B\sigma^{\top} + \sigma B^{\top} = (\bar{\beta}- \beta)\sigma\sigma^{\top} \,.
\end{equation}

\end{lem}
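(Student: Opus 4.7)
The plan is to characterise invariance through the stationary Fokker--Planck equation $\mathcal L^{*}\rho_{\infty}=0$, where $\mathcal L^{*}$ is the $L^{2}$-adjoint of the generator of \eqref{langevinSDE} with $u_{t}=B^{\top}Y_{t}$. Since the drift is affine in $y$ and the density $\rho_{\infty}\propto \exp(-\beta H)$ has Gaussian $y$-marginal, plugging $\rho_{\infty}$ into $\mathcal L^{*}\rho_{\infty}=0$ will give a polynomial identity in $y$ that decouples into a quadratic part and a constant part, and condition \eqref{FDR2} will emerge from matching the quadratic coefficients.

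Concretely, I would first record the identities $\nabla_{x}\rho_{\infty}=-\beta\nabla V(x)\rho_{\infty}$, $\nabla_{y}\rho_{\infty}=-\beta y\rho_{\infty}$, and $\nabla_{y}^{2}\rho_{\infty}=(\beta^{2}yy^{\top}-\beta I)\rho_{\infty}$, then expand
\begin{equation*}
\mathcal L^{*}\rho_{\infty} = -\nabla_{x}\!\cdot(y\rho_{\infty}) -\nabla_{y}\!\cdot\bigl((\sigma B^{\top}y-\nabla V(x)-\gamma y)\rho_{\infty}\bigr) + \tfrac{1}{2}\sigma\sigma^{\top}\!:\!\nabla_{y}^{2}\rho_{\infty}.
\end{equation*}
A direct computation shows that the terms involving $\nabla V(x)$ cancel (this is exactly why the Hamiltonian structure is convenient), leaving an expression that depends only on $y$:
\begin{equation*}
\frac{\mathcal L^{*}\rho_{\infty}}{\rho_{\infty}} = \beta\, y^{\top}\!\sigma B^{\top}y - \beta\, y^{\top}\gamma y + \tfrac{\beta^{2}}{2}\, y^{\top}\!\sigma\sigma^{\top}y \;-\; \mathrm{tr}(\sigma B^{\top}-\gamma) - \tfrac{\beta}{2}\mathrm{tr}(\sigma\sigma^{\top}).
\end{equation*}

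Next I would split this into the quadratic piece and the constant piece. Using that only the symmetric part of $\sigma B^{\top}$ contributes to the quadratic form $y^{\top}\!\sigma B^{\top}y=\tfrac12 y^{\top}(\sigma B^{\top}+B\sigma^{\top})y$, and inserting the fluctuation--dissipation relation \eqref{FDR} in the form $2\gamma=\bar\beta\sigma\sigma^{\top}$, vanishing of the quadratic part for all $y\in\R^{n}$ is equivalent to
\begin{equation*}
\sigma B^{\top} + B\sigma^{\top} = (\bar\beta-\beta)\sigma\sigma^{\top},
\end{equation*}
which is exactly \eqref{FDR2}. Taking the trace of this identity gives $\mathrm{tr}(\sigma B^{\top})=\tfrac{\bar\beta-\beta}{2}\mathrm{tr}(\sigma\sigma^{\top})$, and together with $\mathrm{tr}(\gamma)=\tfrac{\bar\beta}{2}\mathrm{tr}(\sigma\sigma^{\top})$ this makes the constant piece automatically vanish, so the quadratic condition alone is both necessary and sufficient.

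The two directions of the equivalence then follow at once: if \eqref{FDR2} holds, $\mathcal L^{*}\rho_{\infty}=0$ so $\mu_{\infty}$ is invariant; conversely, if $\mu_{\infty}$ is invariant, $\mathcal L^{*}\rho_{\infty}=0$ everywhere, and since the quadratic form $y^{\top}\bigl[\sigma B^{\top}+B\sigma^{\top}-(\bar\beta-\beta)\sigma\sigma^{\top}\bigr]y$ must vanish identically in $y$, its (symmetric) coefficient matrix vanishes, giving \eqref{FDR2}. The only mildly delicate point is the algebraic bookkeeping in separating the $y$-dependence and making sure the $\nabla V$ terms cancel; nothing deeper than that is required, and the FDR assumption does the rest of the work.
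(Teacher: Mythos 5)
Your proposal is correct and is essentially the same argument as the paper's, just written on the dual side: you verify $\mathcal L^{*}\rho_{\infty}=0$ directly (stationary Fokker--Planck), whereas the paper checks $\int Lf\,d\mu_{\infty}=0$ for test functions and integrates by parts, which produces exactly the same polynomial $g(y)$ in $y$. Both arguments discard the Hamiltonian transport (your $\nabla V$-cancellation is the same as the paper's observation that $L_{1}$ annihilates $\rho_{\infty}$), match the quadratic coefficient to obtain \eqref{FDR2}, and note that the trace/constant piece then vanishes automatically, so the quadratic condition is both necessary and sufficient.
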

\begin{proof} 
The infinitesimal generator of (\ref{langevinSDE}) on $C^{2}$ has the form $L=L_{1}+L_{2}$, with 
\begin{equation}
L _{1} = y\cdot \nabla_{x} - \nabla_{x}V\cdot \nabla_{y} \quad L _{2} =  \frac{1}{2}\sigma\sigma^{\top}\colon \nabla^{2}_{y} + \left(\sigma B^{\top}y -\gamma y\right)\cdot \nabla_{y}\,,
\end{equation}
where the Hamiltonian part, $L_{1}$, is skew-symmetric \wrt the standard $L^{2}$ scalar product. Hence it suffices to show that, for any smooth and integrable function $f$,
\begin{equation}\label{Lfdmu}
\int (L_{2} f)d\mu_{\infty} = 0
\end{equation}
if and only if (\ref{FDR2}) holds. Using integration by parts, we obtain 
\begin{align*}
\int &(L_{2} f)d\mu_{\infty}  = \int f\Bigl(  \frac{1}{2}\sigma\sigma^{\top}\colon \nabla^{2}_{y} - \left(\sigma B^{\top}y -\gamma y\right)\cdot \nabla_{y} - {\rm tr}(\sigma B^{\top}-\gamma)\Bigr)\!\rho_{\infty} \,dxdy\\
& = \int f\Bigl(  \frac{1}{2}\sigma\sigma^{\top}\colon (\beta^{2}yy^{\top} -\beta I_{n\times n}) + \beta\left(\sigma B^{\top}y -\gamma y\right)\cdot y - {\rm tr}(\sigma B^{\top}-\gamma)\Bigr)\!\rho_{\infty}\,dxdy\\
& = \int f(x,y) g(y) \rho_{\infty}(x,y)\,dxdy\,,
\end{align*}
with 
\begin{equation*}
g(y) =   \frac{1}{2}\sigma\sigma^{\top}\colon (\beta^{2}yy^{\top} -\beta I_{n\times n}) + \beta\bigl(\sigma B^{\top}y -\gamma y\bigr)\cdot y - {\rm tr}(\sigma B^{\top}-\gamma)\,.
\end{equation*}
Since $\rho_{\infty}>0$, it follows by (\ref{FDR})  that $g=0$ if and only if the symmetric part of the matrix $\sigma B^{T}$ is equal to $\frac{1}{2}  (\bar{\beta}- \beta)\sigma\sigma^{\top} $ or, in other words, 
\begin{equation*}
\int (L_{2} f)d\mu_{\infty}  = 0 \quad\forall f\in C^{2}\cap L^{1} \quad \Longleftrightarrow \quad g=0  \quad \Longleftrightarrow \quad B\sigma^{\top} + \sigma B^{\top} = (\bar{\beta}- \beta)\sigma\sigma^{\top}  \,.
\end{equation*}
Hence the assertion is proved. 
\end{proof}
Equation (\ref{FDR2}) is a $T$-Sylvester equation for which existence and uniqueness results can be found in \cite{Wim94}. Unlike the standard Lyapunov equation for a symmetric unknown, its solution need not be unique. In fact, note that  $B=\frac{1}{2}(\bar{\beta}-\beta)\sigma+\sigma M$ is a solution to (\ref{FDR2}) for any skew-symmetric matrix $M=-M^\top$. However, since $\sigma \sigma^\top $ is positive definite and, hence, $\sigma$ is invertible there is also a unique solution such that $\sigma B^\top$ is symmetric. This particular solution corresponds to $M=0$ and will be of interest in the next section. 

Roughly speaking, the matrix $\sigma B^{\top}$ plays the role of a friction coefficient that can cause positive or negative dissipation, depending on whether $\beta>\bar{\beta}$ or $\beta\le \bar{\beta}$.  Typically, we want to speed up the convergence to the target distribution $\mu_\infty$ and so the system is simulated at a temperature $1/\bar{\beta}$ that is higher than the target temperature $1/\beta$, i.e.~we have $\beta>\bar{\beta}$. Then, the eigenvalues of $B\sigma^\top+\sigma B^{\top}$ have strictly negative real part and therefore the control term $\sigma u$ dissipates energy. If $\beta=\bar{\beta}$, equation (\ref{FDR2}) implies that $\sigma B^{\top}$ must be skew-symmetric.

We will henceforth refer to $u$ as either ``control'', ``friction'' or ``dissipation''.

\subsection{A stochastic control problem}

There are many matrices $B$ satisfying the relation  (\ref{FDR2}). For example, if  $\beta=\bar{\beta}$, any matrix $B$ with the property $B\sigma^{\top}=-\sigma B^\top$ preserves the invariant measure $\mu_\infty=\bar{\mu}_\infty$, especially the zero matrix $B=0$. In fact, the symmetric part of $B\sigma^\top$ is unique, whereas the anti-symmetric or skew-symmetric part is not, which is in accordance with previous works on non-reversible perturbations of reversible diffusions, e.g.~\cite{Hwang2005}

We now identify a unique control that has a property that it minimises the relative entropy between the controlled dynamics and an uncontrolled Langevin system that is initialised at the correct equilibrium distribution, with the aim of accelerating the convergence to equilibrium. The stochastic control approach leads to a Langevin dynamics that has a minimum-dissipation property, in that it is as close as possible to a reversible system in a wide sense (i.e.\ including momentum flips \cite{Duncan2017}), since the friction term becomes symmetric again. 

Letting $\rho_t$ denote the probability density function of the marginal law of the controlled process (\ref{langevinSDE}) at time $t\ge 0$. Then, by Lemma \ref{thm:FIR} in Appendix \ref{sec:FIR}, it follows that 
\begin{equation}\label{FIR0}
	\KL(\rho_\infty|\rho_T) -  	\KL(\rho_\infty|\rho_0) \le  \int_0^T R(u_{t})\,dt\,,
\end{equation}
where $\KL(\cdot,\cdot)$ as defined in (\ref{KL}) denotes the Kullback-Leibler divergence (or relative entropy) between two probability densities, and 
\begin{equation}\label{rate0}
	R(u) = \frac{1}{2}\bE\!\left[\Bigl|\sigma u - \frac{1}{2}(\bar{\beta}-\beta)\sigma\sigma^{\top}y\Bigr|_{(\sigma\sigma^{\top})^{-1}}^{2}\right]\,
\end{equation}
is the asymptotic entropy production (AEP) rate when the uncontrolled process is started in equilibrium; see Appendix \ref{sec:AEP} for details \cite{Roeckner2016,Sharma2016}. Here, $|z|_g=z^\top g z$ for a symmetric and positive definite matrix $g$, and   $\bE[\cdot]$ denotes the expectation with respect to the target density $\rho_\infty$.
Our aim is to minimise the AEP rate $R$ over the  set of admissible controls
\begin{equation}\label{finalDensity}
\mathcal{U}=\bigl\{u\in C([0,\infty),\R^n)\colon u_t=B^\top Y_t,\, B\sigma^{\top} + \sigma B^{\top} = (\bar{\beta}- \beta)\sigma\sigma^{\top} \bigr\}\,,
\end{equation}
which boils down to a minimisation over matrices $B\in\R^{n\times n}$ that are of the form 
\[
B=\frac{1}{2}(\bar{\beta}-\beta)\sigma+\sigma M\,.
\]
for some skew-symmetric matrix $M=-M^\top$. The AEP functional $\tilde{R}(B)=R(B^Ty)$ is strictly convex with the unique minimum $\tilde{R}(B_*)=0$ that is attained at 
\begin{equation}\label{optB}
	B_*=\frac{1}{2}(\bar{\beta}  - \beta)\sigma\,. 
\end{equation}
The optimal matrix $B_*$ has been computed in \cite{Pavon2015} by minimising a slightly different functional, however. 
Combining equation \eqref{optB} with (\ref{FDR}), we see that the controlled system that is simulated at temperature $1/\bar{\beta}$ satisfies a fluctuation-dissipation relation with parameter $\beta$: 
\begin{equation}\label{FDR3}
	2(\gamma-\sigma B_*^{\top}) = \beta\sigma\sigma^\top\,.
\end{equation}

	
	

\section{Large temperature separation limit}\label{sec:limits}
We are interested in the limit $\beta/\bar{\beta}\to\infty$, which can either correspond to the high simulation temperature limit $\bar{\beta}\to 0$ when the target temperature $1/\beta$ is held fixed, or to the zero target temperature limit $\beta\to\infty$ when the simulation temperature $1/\bar{\beta}$ is fixed.  

Using the relation~\eqref{FDR}, it is convenient to write (\ref{langevinSDE}) under the constraint (\ref{optB}) as 
\begin{align}\label{langevinSDEalt}
\begin{aligned}
dX_{t} & = Y_{t}\,dt\,,\quad X_{0}=x\\
dY_{t} & = \left(D^{\top} Y_{t} - \nabla V(X_{t}) \right)dt + \sigma dW_{t}\,,\quad Y_{0}=y\,
\end{aligned}
\end{align}
where 
\[
D^{\top} = - \frac{\beta}{\bar{\beta}}\, \gamma = D\,.
\]
Note that the scaled friction matrix $D$ satisfies the fluctuation-dissipation relation at the correct target temperature $\beta$, 
\begin{align*}
2D=-\beta\sigma\sigma^\top\quad \Longleftrightarrow\quad 2\gamma=\bar{\beta}\sigma\sigma^\top\,.
\end{align*}

\subsection*{Fixed simulation temperature}

We introduce the dimensionless parameter $\eps=\bar{\beta}/\beta\in(0,1]$. Upon rescaling time according to $t\mapsto t/\eps$, we can recast our Langevin equation (\ref{langevinSDEalt}) as 
\begin{subequations}\label{langevinSDEalt2}
\begin{align}
dX_{t} & = \frac{1}{\eps}Y_{t}\,dt\,,\quad X_{0}=x\label{langevinSDEalt2X}\\
dY_{t} & = - \frac{1}{\eps^2}\gamma Y_tdt - \frac{1}{\eps}\nabla V(X_{t})dt   + \frac{1}{\sqrt{\eps}}\sigma dW_{t}\,,\quad Y_{0}=y\,.\label{langevinSDEalt2Y}
\end{align}
\end{subequations}
We are interested in the limit $\eps\to 0$ when both $\sigma$ and $\gamma$ are held fixed. It turns out that, in this limit, the $x$-component of (\ref{langevinSDEalt2}) mimics a stochastic gradient descent algorithm for small $\eps$ in the sense that, as  $\eps\to 0$, it converges to the solution of the gradient system 
\begin{equation}\label{SGD}
\frac{d}{dt}x(t) = -\gamma^{-1}\nabla V(x(t))\,,\quad x(0)=x\,.
\end{equation} 
This can be seen as follows. Integrating the second equation  of (\ref{langevinSDEalt2}) from zero to $t$ yields
\begin{align}\label{Yint}
Y_{t} - y =  - \frac{1}{\eps^2}\gamma \int_{0}^{t}Y_s\,ds - \frac{1}{\eps}\int_{0}^{t}\nabla V(X_{s})\,ds   + \frac{1}{\sqrt{\eps}}\sigma W_{t}\,,
\end{align}
which, together with the first equation 
\begin{align}\label{Xint}
X_{t} - x = \frac{1}{\eps}\int_{0}^{t} Y_{s}\,ds \,,
\end{align}
leads to
\begin{align*}
X_{t} - x =  - \gamma^{-1}\left(\int_{0}^{t}\nabla V(X_{s})\,ds  - \sqrt{\eps}\sigma W_{t} - \eps(y - Y_{t})\right)\,,
\end{align*}
and therefore formally passing $\eps\rightarrow 0 $ we arrive at the limit equation. 
We now give the precise statement. 
\begin{theorem}\label{thm:SGD}
	Let $\nabla V$ be Lipschitz continuous on $\R^{n}$ with constant $L_V$. Furthermore let $X=X^{\eps}$ and $x$ be the solutions of  (\ref{langevinSDEalt2}) and (\ref{SGD}) with $X_{0}=x(0)$. Then, for any $T>0$,
	\begin{align*}
	\lim\limits_{\eps \to 0}\sup_{0\le t\le T}|X_t - x_t| = 0 \;\text{ almost surely.}
	\end{align*}
	Moreover we have the pre-asymoptotic bound  
	\begin{align*}
	\sqrt{ \bE\Bigl[\sup_{0\le t\le T}|X_t - x_t|^2\Bigr]} \le C\sqrt{\eps}\,,
	\end{align*}
	 for a constant $C=C(T,L_V,\gamma,\sigma,n)$ independent of $\eps$.  
\end{theorem}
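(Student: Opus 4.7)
The plan is to work from the integral identity derived just above the theorem,
\[
X_t^\eps = x(0) - \gamma^{-1}\!\int_0^t \nabla V(X_s^\eps)\,ds + \sqrt{\eps}\,\gamma^{-1}\sigma W_t + \eps\,\gamma^{-1}(y - Y_t^\eps),
\]
and subtract the integrated gradient flow $x(t) = x(0) - \gamma^{-1}\int_0^t \nabla V(x(s))\,ds$, which shares the initial point $X_0 = x(0)$. Setting $R_t^\eps := \sqrt{\eps}\gamma^{-1}\sigma W_t + \eps\gamma^{-1}(y - Y_t^\eps)$, the Lipschitz bound $|\nabla V(X_s^\eps) - \nabla V(x(s))| \le L_V|X_s^\eps - x(s)|$ combined with Gronwall's inequality yields pathwise
\[
\sup_{0\le t\le T}|X_t^\eps - x(t)| \,\le\, C(T,L_V,\gamma)\,\sup_{0\le t\le T}|R_t^\eps|,
\]
so the task reduces to showing that $\bE\sup_t|R_t^\eps|^2 = O(\eps)$ and $\sup_t|R_t^\eps| \to 0$ almost surely.

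The Brownian contribution is easy: Doob's maximal inequality gives $\bE\sup_t|W_t|^2 \le 4nT$, hence $\bE\sup_t|\sqrt{\eps}\gamma^{-1}\sigma W_t|^2 = O(\eps)$, while pathwise $\sqrt{\eps}\sup_t|W_t| \to 0$ almost surely. For the momentum term I would use Duhamel's formula
\[
Y_t^\eps = e^{-\gamma t/\eps^2}y - \frac{1}{\eps}\int_0^t e^{-\gamma(t-s)/\eps^2}\nabla V(X_s^\eps)\,ds + I_t^\eps,\qquad I_t^\eps := \frac{1}{\sqrt{\eps}}\int_0^t e^{-\gamma(t-s)/\eps^2}\sigma\,dW_s,
\]
where the first term is bounded by $|y|$ and the Duhamel integral by $(\eps/\lambda_{\min}(\gamma))\sup_s|\nabla V(X_s^\eps)|$. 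The key observation for the stochastic convolution is the exact rescaling $I^\eps_t = \sqrt{\eps}\,\tilde I_{t/\eps^2}$, where upon introducing $\tilde W_\tau := \eps^{-1}W_{\eps^2\tau}$ (also a standard Brownian motion) the process $\tilde I_\tau$ solves the $\eps$-independent OU equation $d\tilde I_\tau = -\gamma\tilde I_\tau\,d\tau + \sigma\,d\tilde W_\tau$, $\tilde I_0 = 0$. Hence $\sup_{t\le T}|I_t^\eps| = \sqrt{\eps}\,\sup_{\tau\le T/\eps^2}|\tilde I_\tau|$, and standard stationary Gaussian maximal estimates (or the law of the iterated logarithm) yield $\bE\sup_t|I_t^\eps|^2 = O(\eps\log(1/\eps))$ together with the corresponding pathwise decay.

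To close the loop I would use that $x(s)$ stays in a fixed compact set on $[0,T]$ (since $V$ is coercive, bounded below, and $V(x(s))$ is non-increasing along (\ref{SGD})), so $\sup_s|\nabla V(X_s^\eps)| \le C(T,x(0),V) + L_V\sup_s|X_s^\eps - x(s)|$. Substitution produces a self-consistent estimate of the form
\[
\bE\sup_{t\le T}|X_t^\eps - x(t)|^2 \le C_1\eps + C_2\eps^2\,\bE\sup_{t\le T}|X_t^\eps - x(t)|^2,
\]
which rearranges for small $\eps$ to the desired pre-asymptotic rate. Almost sure convergence then follows from the pathwise decay of each remainder term, or alternatively by combining Markov's inequality with the Borel--Cantelli lemma along a summable subsequence. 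The main obstacle is the uniform-in-$\eps$ control of $\sup_t|Y_t^\eps|$: a direct It\^o calculation on $|Y_t^\eps|^2$ produces a $t/\eps$ contribution from the quadratic variation that is only cancelled at stationarity by the $1/\eps^2$ friction and cannot be cleanly estimated by dropping it. Time-rescaling the stochastic convolution to an autonomous OU process on the long horizon $T/\eps^2$ is the key device that sidesteps this difficulty.
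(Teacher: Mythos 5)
Your proposal follows essentially the same route as the paper: substitute Duhamel's formula for $Y_t$ into the integral for $X_t$, subtract the gradient flow, bound each resulting term, and close with Gronwall. Two details differ. First, for the stochastic convolution $I_t^\eps$ the paper integrates by parts (noting that the kernel $\gamma^{-1}(I_n - e^{-(t-r)\gamma/\eps^2})\sigma$ vanishes at $r=t$), which turns the stochastic integral into a deterministic convolution against $W_r$; the a.s.\ bound then uses path continuity of $W$ and the $L^2$ bound uses Doob plus It\^o isometry, giving $O(\eps)$ directly. Your time-rescaling to the autonomous OU process on $[0,T/\eps^2]$ is a clean alternative, and you correctly note the $O(\sqrt{\eps\log(1/\eps)})$ cost in $\sup|I^\eps|$; this is harmless here because $I^\eps$ enters $R^\eps$ only through $\eps\gamma^{-1}I_t^\eps$, so the extra factor of $\eps$ more than absorbs the logarithm and you still get $O(\sqrt{\eps})$ overall. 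Second, you keep $\nabla V(X_s^\eps)$ inside the convolution and control it via the self-consistent bound $\sup_s|\nabla V(X_s^\eps)|\le C + L_V\sup_s|X_s^\eps - x_s|$; the paper instead writes $\nabla V(X_r) = \nabla V(x_r) + (\nabla V(X_r) - \nabla V(x_r))$ before estimating, so the convolution term carries only the deterministic quantity $\max_{r\le T}|\nabla V(x_r)|$ and no self-referential inequality is needed. Your variant works, but it requires (a) an a priori $\bE\sup_t|X_t^\eps - x_t|^2 < \infty$ before rearranging, and (b) $\eps$ small enough that $C_2\eps^2 < 1$, so the constant $C$ is uniform only on an $\eps$-interval bounded away from $1$; the paper's decomposition gives a constant valid for all $\eps\in(0,1]$ without these caveats. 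Finally, the ``main obstacle'' you flag—uniform control of $\sup_t|Y_t^\eps|$—is not actually needed in either argument once Duhamel is substituted into the $X$-integral, as your own sketch demonstrates; the paper's version never mentions $Y$ after that substitution.
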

The proof is based on~\cite[Prop. 2.14]{mathias2010free} and we outline the main steps in Appendix~\ref{app:SGD}. 

\subsection*{Fixed target temperature} By construction, under the limit  $\beta/\bar{\beta}\to\infty$ for fixed $\bar{\beta}$ the target measure is changed. If, however $\beta/\bar{\beta}\to\infty$ while $\beta$ is held fixed, then the target measure is obviously unaltered. We define $\varsigma:=\sigma\sqrt{\bar{\beta}/\beta}$, with 
\[
\beta\varsigma \varsigma^\top =\bar{\beta}\sigma \sigma^\top=2\gamma
\]
such that $\varsigma$ is independent of $\bar{\beta}$.
After rescaling time again according to $t\mapsto t/\eps$, we obtain 
\begin{equation}\label{langevinSDEalt3}
\begin{aligned}
dX_{t}  &= \frac{1}{\eps}Y_{t}\,dt\,,\quad X_{0}=x \\
dY_{t}  &= - \frac{1}{\eps^2}\gamma Y_tdt - \frac{1}{\eps}\nabla V(X_{t})dt   + \frac{1}{\eps}\varsigma\, dW_{t}\,,\quad Y_{0}=y. \end{aligned}
\end{equation}
Note the $1/\eps$ scaling of the noise term that is different from the $1/\sqrt{\eps}$ scaling in (\ref{langevinSDEalt2}).
We are interested in the limit $\eps\to 0$ when both $\varsigma$ and $\gamma$ are held fixed. (Note that the noise coefficient $\varsigma$ is independent of $\bar{\beta}$.) It is well-known that, under this scaling, the $x$-component converges pathwise to the solution of the overdamped Langevin equation 
\begin{equation}\label{Smoluchowski}
\gamma d\bar{X}_{t} = -\nabla V(\bar{X}_{t})dt + \varsigma\, dW_{t}\,,\quad \bar{X}_{0}=x\,.
\end{equation} 
The following result is due to Nelson  \cite[Thm. 10.1]{nelson1967}.
\begin{theorem}[High-friction limit]\label{thm:nelson}
	Let $\nabla V$ be Lipschitz continuous on $\R^{n}$. Further let $X=X^{\eps}$ and $\bar{X}$ be the solutions of  (\ref{langevinSDEalt3}) and (\ref{Smoluchowski}) respectively with $X_{0}=\bar{X}_{0}$. Then for any $T>0$, 
\begin{align*}
\lim\limits_{\eps\rightarrow 0}\sup_{0\le t\le T}|X_{t} - \bar{X}_{t}| = 0\, \;\text{ almost surely.}
\end{align*}
\end{theorem}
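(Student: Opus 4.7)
The plan is to adapt the integration-based argument used in the proof of Theorem \ref{thm:SGD}, noting that the only structural change here is the rescaling of the noise by $1/\eps$ instead of $1/\sqrt{\eps}$. Integrating the $Y$-equation in (\ref{langevinSDEalt3}) from $0$ to $t$ and using $\int_0^t Y_s\,ds = \eps(X_t-x)$ from (\ref{langevinSDEalt2X}), I would obtain, after dividing by $\eps$, the identity
\begin{equation*}
\gamma(X_t-x) = -\int_0^t \nabla V(X_s)\,ds + \varsigma W_t - \eps(Y_t-y)\,.
\end{equation*}
The overdamped equation (\ref{Smoluchowski}) in integrated form reads $\gamma(\bar X_t-x) = -\int_0^t \nabla V(\bar X_s)\,ds + \varsigma W_t$, and subtracting gives
\begin{equation*}
\gamma(X_t-\bar X_t) = -\int_0^t \bigl(\nabla V(X_s)-\nabla V(\bar X_s)\bigr)ds - \eps(Y_t-y)\,.
\end{equation*}

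From here I would apply the Lipschitz bound on $\nabla V$ and Gr\"onwall's lemma to conclude
\begin{equation*}
\sup_{0\le t\le T}|X_t-\bar X_t| \le C_1\,\eps\,\Bigl(|y|+\sup_{0\le t\le T}|Y_t|\Bigr)e^{C_2 T}\,,
\end{equation*}
with constants depending only on $\gamma$, $L_V$. The whole convergence statement then reduces to controlling the fast variable $Y$ on $[0,T]$ so that $\eps\sup_{0\le t\le T}|Y_t|\to 0$ almost surely.

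For that final step I would use the Duhamel representation
\begin{equation*}
Y_t = e^{-\gamma t/\eps^2}y - \tfrac{1}{\eps}\!\int_0^t e^{-\gamma(t-s)/\eps^2}\nabla V(X_s)\,ds + \tfrac{1}{\eps}\!\int_0^t e^{-\gamma(t-s)/\eps^2}\varsigma\, dW_s\,.
\end{equation*}
The first two summands are bounded in $L^\infty([0,T])$ uniformly in $\eps$ (using the linear growth of $\nabla V$ together with an a priori moment bound on $X_t$, obtained from the same identity above via a preliminary Gr\"onwall argument). The stochastic convolution is an Ornstein--Uhlenbeck-type integral whose pointwise variance is $O(1)$; a standard factorisation/maximal inequality argument \`a la Da Prato--Kwapie\'n--Zabczyk yields $\bE[\sup_{0\le t\le T}|Y_t|^p]\le C_p(T)$ uniformly in $\eps$, whence Chebyshev and Borel--Cantelli on a geometric sequence $\eps_k\downarrow 0$ give $\eps_k\sup_{[0,T]}|Y_{t}^{\eps_k}|\to 0$ almost surely, and monotonicity of the bound in $\eps$ upgrades this to the full limit.

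The main obstacle is exactly this last step: the process $Y$ does not become small, it only oscillates ever more rapidly, so one has to extract the factor $\eps$ cleanly while tolerating the loss introduced by the supremum over a time interval whose natural fast-time length $T/\eps^2$ diverges. The cleanest route is the maximal inequality for the stochastic convolution; an alternative is to discretise $[0,T]$ on a scale $\eps^2\log(1/\eps)$ and use a union bound, but this yields only the weaker $\sqrt{\log(1/\eps)}$ rate, which is still more than enough since it is multiplied by $\eps$.
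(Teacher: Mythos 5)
The paper gives no proof of this theorem; it is stated as a citation to Nelson's Theorem~10.1, and the only proof the paper carries out in full is for the sibling Theorem~\ref{thm:SGD} (the $1/\sqrt{\eps}$-noise scaling), in Appendix~\ref{app:SGD}. Your plan of adapting that appendix argument is the right instinct, and your preliminary reduction is correct: integrating and substituting $\int_0^t Y_s\,ds=\eps(X_t-x)$ yields $\gamma(X_t-\bar X_t)=-\int_0^t(\nabla V(X_s)-\nabla V(\bar X_s))\,ds-\eps(Y_t-y)$, and Gr\"onwall reduces the claim to $\eps\sup_{[0,T]}|Y_t|\to0$. But the argument you propose for that last step contains a genuine error. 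The claim that $\bE[\sup_{0\le t\le T}|Y_t|^p]\le C_p(T)$ uniformly in $\eps$ is false: on the fast time scale $\tau=t/\eps^2$, the stochastic convolution $\frac1\eps\int_0^t e^{-\gamma(t-s)/\eps^2}\varsigma\,dW_s$ is a stationary Ornstein--Uhlenbeck process observed over $[0,T/\eps^2]$, whose running maximum diverges like $\sqrt{\log(1/\eps)}$; no Da~Prato--Kwapie\'n--Zabczyk factorisation removes this logarithmic loss, since the growth is intrinsic to the length of the observation window. Your alternative discretisation in fact gives the correct $O(\sqrt{\log(1/\eps)})$ rate, which still beats the factor $\eps$, but your final step — Borel--Cantelli along a geometric sequence $\eps_k\downarrow0$ followed by ``monotonicity of the bound in $\eps$'' — is not justified, because $Y^\eps$ for distinct $\eps$ solve different SDEs and there is no pathwise nesting that upgrades a subsequential statement to the full limit $\eps\to0$.

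A clean fix, which is also what the paper's own proof of Theorem~\ref{thm:SGD} does (and what is implicit in Nelson's argument), is to \emph{not} estimate $\sup|Y|$ at all: substitute the Duhamel formula for $Y$ directly into $X_t=x+\frac1\eps\int_0^t Y_s\,ds$, apply Fubini, and obtain
\begin{align*}
X_t = x + \eps\gamma^{-1}(I_n-e^{-\gamma t/\eps^2})y - \int_0^t\gamma^{-1}(I_n-e^{-\gamma(t-r)/\eps^2})\nabla V(X_r)\,dr + \int_0^t\gamma^{-1}(I_n-e^{-\gamma(t-r)/\eps^2})\varsigma\,dW_r\,.
\end{align*}
Subtracting the integrated form of (\ref{Smoluchowski}), the stochastic error is $-\int_0^t\gamma^{-1}e^{-\gamma(t-r)/\eps^2}\varsigma\,dW_r$, and It\^o integration by parts gives
\begin{align*}
\int_0^t e^{-\gamma(t-r)/\eps^2}\varsigma\,dW_r = e^{-\gamma t/\eps^2}\varsigma W_t + \frac{\gamma}{\eps^2}\int_0^t e^{-\gamma(t-r)/\eps^2}\varsigma\,(W_t-W_r)\,dr\,.
\end{align*}
Both terms vanish uniformly on $[0,T]$, almost surely, using only the a.s.\ uniform continuity of the Brownian path: split the second integral at $r=t-\eta$, bound the near part by the modulus of continuity $\omega_W(\eta)$ and the far part by $2\sup_{[0,T]}|W|\,e^{-\gamma_{\min}\eta/\eps^2}$, send $\eps\to0$ and then $\eta\to0$. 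No moment bounds, no Borel--Cantelli, and no subsequence are needed, and Gr\"onwall closes the argument as in Appendix~\ref{app:SGD}. This is also the device that captures the cancellation between the underdamped noise contribution and $\gamma^{-1}\varsigma W_t$ of the limit equation, a cancellation that your decomposition through $\eps\sup|Y_t|$ discards, which is precisely why it forces you into the harder maximal-inequality estimate.
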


\section{Numerical examples}\label{sec:numerics}

We consider three different examples, with increasing degree of complexity: a linear system where we can explicitly control the speed of convergence, a high-dimensional bistable system, for which we want to compute a slowly converging marginal, and a Lennard-Jones cluster for which we seek a minimum energy configuration. 

\subsection{Optimal temperature ratio}

We consider a linear Langevin system of the form
\begin{equation}\label{linearSDE}
dZ_{t} = AZ_{t}dt + C dW_{t}
\end{equation}
with the matrices 
\begin{equation}\label{linearSDEcoeff1}
A = \begin{pmatrix}
0 & I_{n}\\ -K & \sigma B^\top - \gamma
\end{pmatrix}\in\R^{2n\times 2n}\,,\quad C =   \begin{pmatrix}
0\\  \sigma\end{pmatrix}\in\R^{2n\times n}\,,
\end{equation}
where $\gamma,K\in\R^{n\times n}$ are symmetric positive definite matrices. Using (\ref{FDR}) together with (\ref{optB}), the coefficient matrices can be recast as 
\begin{equation}\label{linearSDEcoeff2}
A = \begin{pmatrix}
0 & I_{n}\\ -K & - \frac{\beta}{\bar\beta}\gamma 
\end{pmatrix}\,, \quad C =   \begin{pmatrix}
0\\  \sqrt{2\bar{\beta}^{-1}}\,\gamma^{1/2} \end{pmatrix}\,.
\end{equation}
Note that (\ref{linearSDE}) is basically the controlled Langevin equation (\ref{langevinSDEalt}) with a quadratic Hamiltonian. Hence, by construction, it has a unique Gaussian invariant measure $\mu_{\infty}=\cN(0,\beta^{-1}\Sigma)$ with $\Sigma={\rm diag}(K^{-1},I_{n\times n})\in\R^{2n\times 2n}$.

We are interested in the optimal temperature ratio $\beta/\bar{\beta}$ that maximizes the speed of convergence to $\mu_{\infty}$. Using 
 \cite[Theorem 4.9]{ArnoldErb} and the Czisza\`r-Kullback-Pinsker inequality, the convergence is exponential with a rate given by the real part of the principal eigenvalue of $A$ (spectral abscissa).  
Specifically, letting $-r=\max\{{\rm Re}(\lambda)\colon Av=\lambda v\}$ and calling $\eta_{t}$ the probability density associated with the law of $Z_{t}$ at time $t>0$, we have, for some $C>0$, 
\begin{equation}\label{ArnoldErb}
\KL(\eta_{t}|\rho_{\infty}) \le C\exp(-2rt)
\end{equation}
where $\rho_{\infty}=\eta_{\infty}$ denotes the density associated with the invariant measure $\mu_{\infty}$ and $\KL(\eta|\rho)$ denotes the Kullback-Leibler (KL) divergence or relative entropy between two probability measures with densities $\eta$ and $\rho$; see Definition \ref{defn:kl} below. 

We want to determine the inverse temperature ratio $\beta/\bar{\beta}$ that brings the principal eigenvalue of $A$ as far away as possible from  the imaginary axis, i.e., we want to minimize the largest real part of the eigenvalues. To begin with, we consider the case $n=1$.  The two eigenvalues $A$ are given by
\begin{align*}
\lambda_{\pm} = \frac{{\rm tr}(A)}{2} \pm\sqrt{\Bigl(\frac{{\rm tr}(A)}{2}\Bigr)^{2} -  \det(A)}\,,
\end{align*}
and they are either negative real or form a conjugate complex pair, since ${\rm tr}(A)<0$ and
\begin{align*}
{\rm Re}\Biggl(\sqrt{\Bigl(\frac{{\rm tr}(A)}{2}\Bigr)^{2}  -  \det(A)}\Biggr) < \frac{{\rm tr}(A)}{2}.
\end{align*}
The optimal rate $2r^{*}={\rm tr}(A)$ is therefore achieved when the two eigenvalues are equal and real, which is exactly the case when $({\rm tr}(A))^{2}=4\det(A)$ or, equivalently, 
\begin{equation}\label{optRatio}
\frac{\beta}{\bar\beta} = 2\frac{\sqrt{K}}{\gamma}\,.
\end{equation}
This is to say that, for given $K$, the optimal ratio between simulation temperature and target temperature scales as $2/\gamma$ with the friction coefficient $\gamma$; for example, when  $K=\gamma=1$, the optimal simulation temperature is twice the target temperature.  Note, however, that the optimal temparature ratio can be smaller than 1 if the friction coefficient is large.  
\begin{figure}
	\centering
	\includegraphics[width=0.49\textwidth]{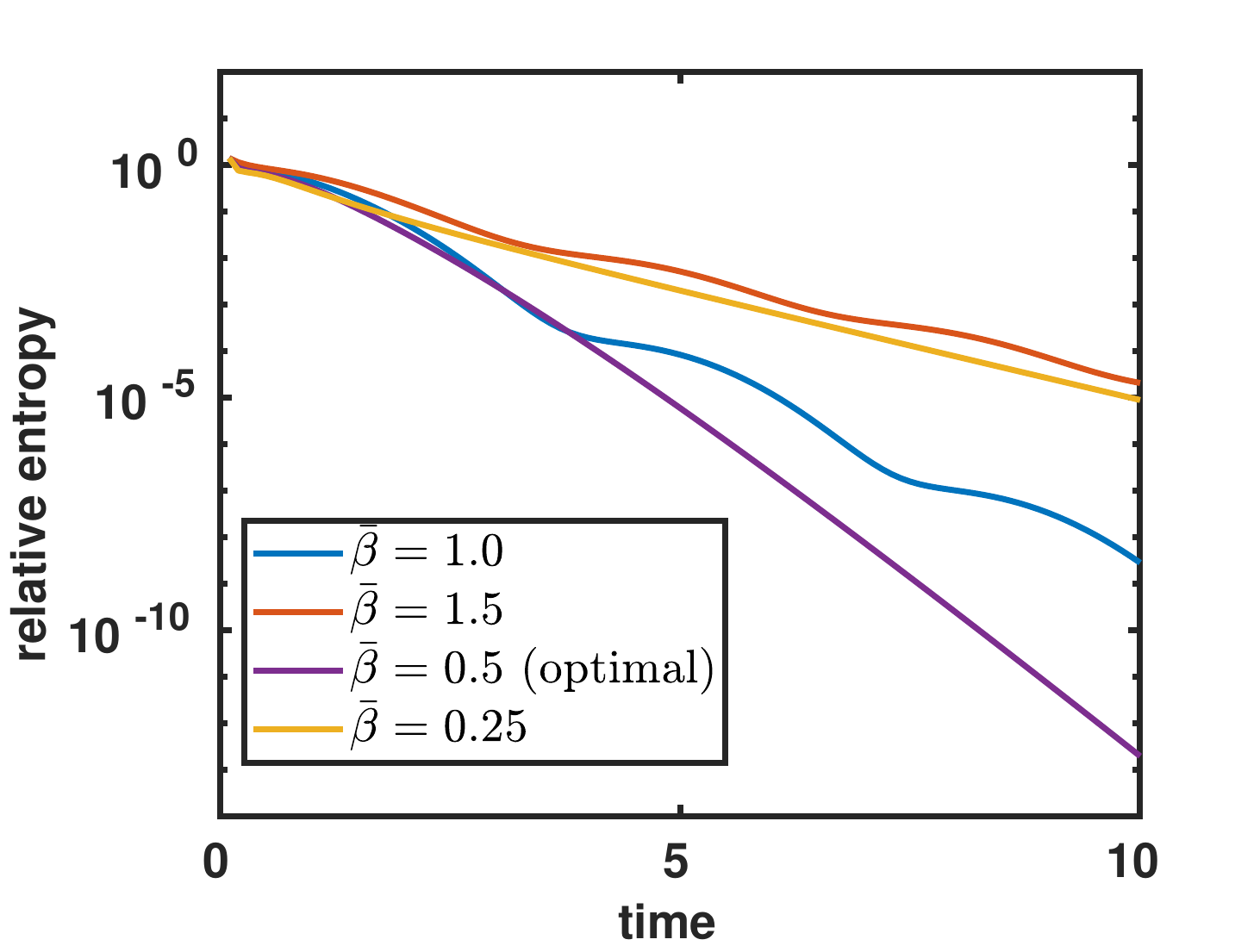}
	\includegraphics[width=0.49\textwidth]{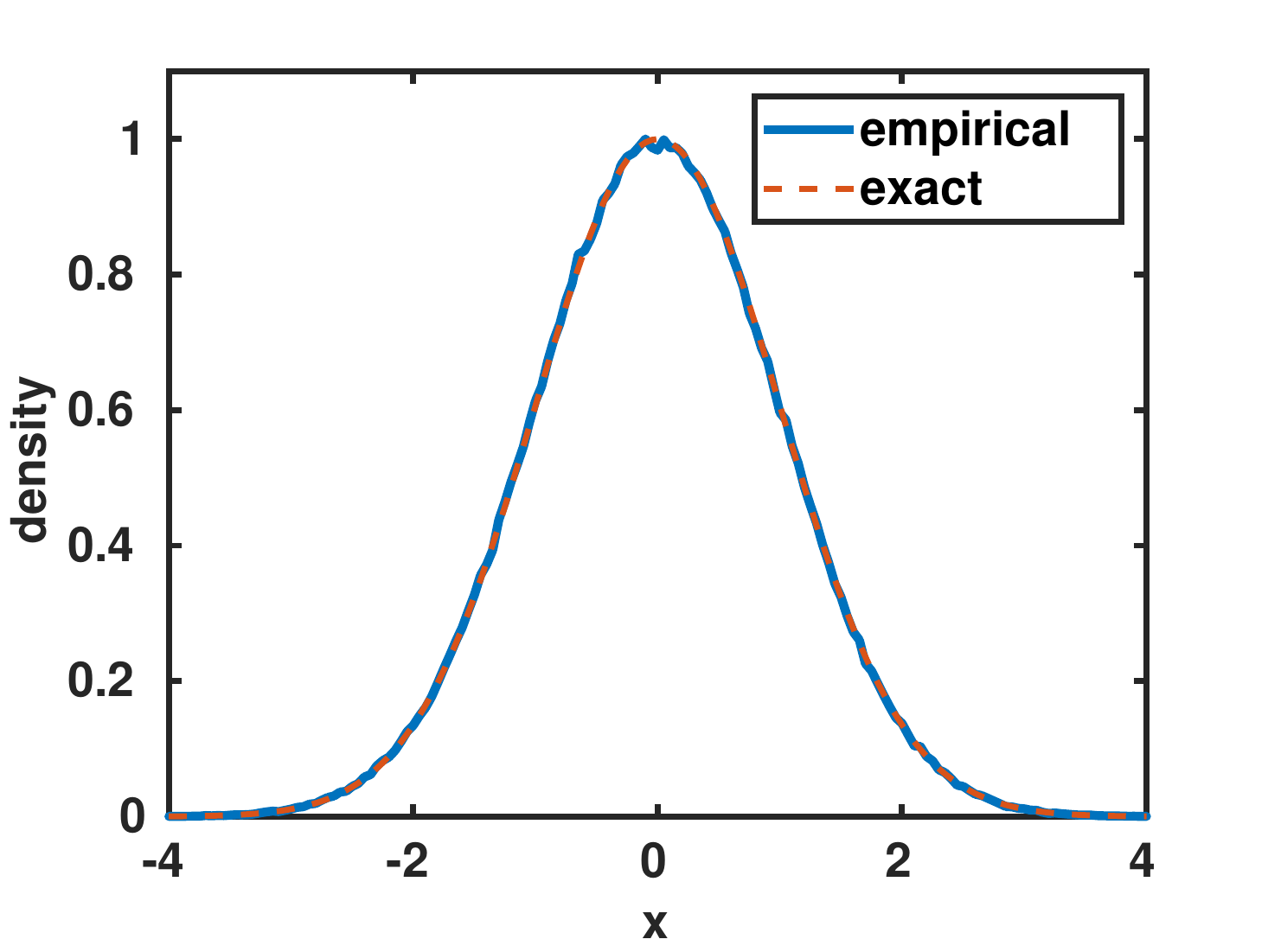}
	\caption{Relative entropy (left panel) between the time-dependent probability density of the Ornstein-Uhlenbeck process (\ref{linearSDE}) for various simulation temperatures $\bar{\beta}$. Since the initial data $Z_0$ satisfy  $\bE[Z_0]=0$, the plots show only the relaxation of the covariance $\Sigma_t=\bE[Z_tZ_t^T]$ to  $\Sigma_\infty$. For illustration, the right panel shows the exact stationary density and its Monte-Carlo approximation based on an ergodic average for a moderate value of $T$ and the optimal simulation temperature $\bar{\beta}=0.5$.}\label{fig:linearKL}
\end{figure}
\paragraph*{Illustration}
We illustrate the previous considerations with (\ref{linearSDE}) where we choose the coefficients
\begin{align*}
A = \begin{pmatrix}
0 & 1\\ -1 & - \beta/\bar{\beta}
\end{pmatrix}\,, \quad C =   \begin{pmatrix}
0\\  \sqrt{2\bar{\beta}} \end{pmatrix}\,
\end{align*}
for $\beta=1$ and random initial conditions $Z_{0}$, with 
\begin{align*}
\bE[Z_{0}]=0\,,\quad \bE[Z_{0}Z_{0}^{T}] = \begin{pmatrix}
 0.1 & 0\\ 0 & 0.1
\end{pmatrix}\,.
\end{align*}
We call $\Sigma_{0} =  \bE[Z_{0}Z_{0}^{T}]$. Then $\eta_{t}\sim\cN(0,\Sigma_{t})$ with 
\begin{align*}
\Sigma_{t} = \Sigma_\infty + \exp(At)(\Sigma_0 - \Sigma_\infty)\exp(A^Tt)
\end{align*}
The KL divergence between the two centered Gaussians $\eta_{t}$ and $\rho_{\infty}$ is readily seen to be~\cite{matrixCookbook}
\begin{align*}
\KL(\eta_t|\rho_t) = \frac{1}{2}\left({\rm trace}(\Sigma_t\Sigma_\infty^{-1}) - \log\left(\det(\Sigma_t)\det(\Sigma_\infty^{-1})\right) - 1\right)\,.
\end{align*}
Figure \ref{fig:linearKL} shows the results of a comparison for different values of $\bar{\beta}$. It is clearly visible that the optimal value $\bar{\beta}=\beta/2$ beats the equilibrium simulation $\bar{\beta}=\beta$ by several orders of magnitude (purple curve vs. blue curve) whereas the deviation between the distributions is clearly larger for other suboptimal temperature ratios (light blue and orange curves). Furthermore, a larger simulation temperature need not necessarily increase the speed of convergence as shown by the orange curve.

\paragraph*{Multidimensional linear system}
We briefly discuss a generalisation of the previous considerations to the case $n>1$.
Generally, since the eigenvalues of $A$ are either real or occur in complex conjugate pairs, the optimal eigenvalue bound and thus the maximally achievable rate of convergence in (\ref{ArnoldErb}) is when all eigenvalues have the same real part, i.e.\ when 
\begin{align*}
2r^{*} =  - \frac{{\rm tr}(A)}{n}\,.
\end{align*}
To see this we decompose the matrix $A\in\R^{2n\times 2n}$ in (\ref{linearSDEcoeff2}) into symmetric and antisymmetric parts according to 
\begin{align*}
    A = S + U\,, \quad S = \frac{1}{2}(A+A^T)\,,\; U = \frac{1}{2}(A-A^T)\,.
\end{align*}
The fact that ${\rm tr}(A)$ is a real number now implies that 
\begin{align*}
    {\rm tr}(A) = {\rm tr}(S) = \sum_{j=1}^k m_j \lambda_j = \sum_{j=1}^k m_j {\rm Re}(\lambda_j)
\end{align*}
where $m_j$ denotes the algebraic multiplicity of the eigenvalue $\lambda_j$. By construction, the sum of $m_j$ equals $2n$, and so
\begin{align*}
    {\rm tr}(A) \le \sum_{j=1}^k m_j \max\{{\rm Re}(\lambda_j):j=1,\ldots,k\} = - 2n r\,.
\end{align*}
Note, however, that this bound does not imply that the temperature ratio can be made arbitrarily large so as to improve the rate of convergence indefinitely. By Theorem \ref{thm:nelson}, the convergence rate for infinitely large temperature ratios equals the convergence rate of the overdamped Langevin equation at the target temperature. 
Moreover, the optimal convergence rate cannot always be reached. 
For example, let $A\in\R^{4\times 4}$ be defined as in (\ref{linearSDEcoeff2}), with   
\begin{align*}
 K  = I_2\,,\quad \gamma = \begin{pmatrix}
1 & 0\\ 0 & 2\end{pmatrix}\,.
\end{align*}
Introducing the shorthand $\alpha=\beta/\bar{\beta}$, the eigenvalues of the matrix are
\begin{equation}\label{linearSDE4x4}
    \lambda_{1,2} = -\frac{\alpha}{2} \pm\sqrt{\frac{\alpha^2}{4}-1}\,,\quad    
    \lambda_{3,4} = -\alpha \pm\sqrt{\alpha^2-1}\,.
\end{equation}
The optimal temperature ratio $\alpha$ that minimises the real part of the maximum eigenvalue is $\alpha^*=\sqrt{4/3}\approx 1.1547$, and it leads to the eigenvalues  (see Figure \ref{fig:4x4EWP})
\begin{align*}
    \lambda_{1,2}\approx -0.5774\pm 0.8165 i\,,\quad \lambda_3 \approx -0.5774\,,\quad \lambda_4 \approx -1.7321\,.
\end{align*}
Clearly, for any given simulation temperature (i.e. when $\alpha$ is constant), the spectral abscissa cannot be smaller than $r^*={\rm tr}(A)/4\approx -0.8660$, which determines the theoretically optimal speed of convergence (see page \pageref{optRatio}). We observe that the optimal rate of convergence is not reached in this case. It is  unclear, under which conditions this rate can be generally reached, and we refer to \cite{Vanetal09} for numerical algorithms for spectral abscissa minimization.
\begin{figure}
\centering
	\includegraphics[width=0.6\textwidth]{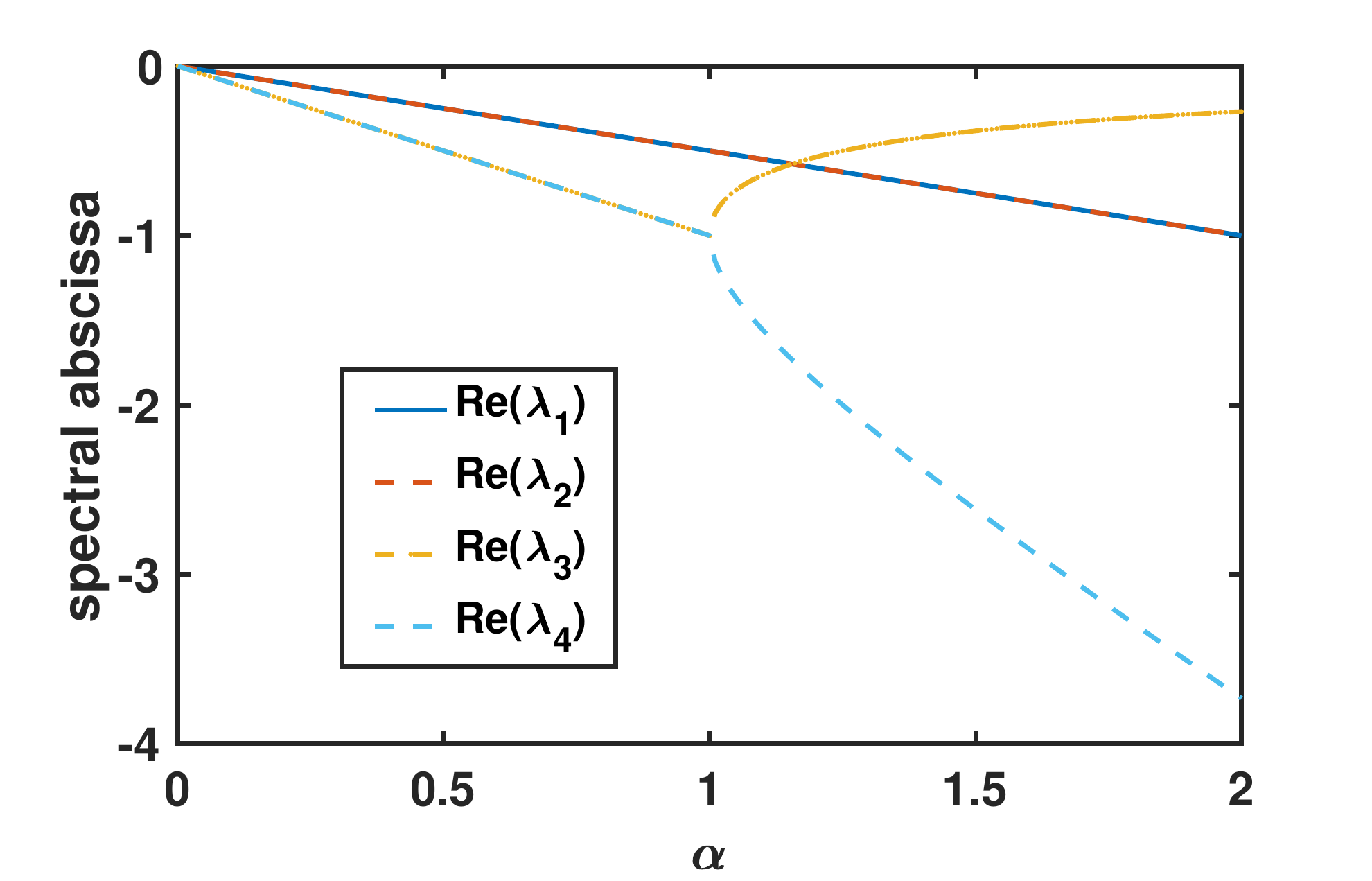}
	\caption{Real parts of eigenvalues defined by (\ref{linearSDE4x4}). The minimum spectral abscissa is attained at $\alpha\approx 1.1547$ when 3 of the eigenvalues with algebraic multiplicity one have the same real part.}\label{fig:4x4EWP}
\end{figure}
%

The following can be shown if the potential energy matrix $K$ and friction matrix $\gamma$ have a special form.
\begin{lem} \label{lem:optRatio}
    Let $K$ and $\gamma$ commute. Then there exists a matrix $S \in \mathbb{R}^{n \times n}$ such that $SKS^{-1} = diag(k_1,\ldots,k_n)$ and $S\gamma S^{-1} = diag(g_1,\ldots,g_n)$, where without loss of generality $g_1 \leq g_2 \leq \ldots \leq g_n$.  The optimal convergence
    rate is then attained for \[\frac{\beta}{\bar\beta}= \min\limits_{j \in \left\{1,\ldots, n\right\}}\sqrt{\frac{4 k_j}{g_j^2 - (g_j - g_1)^2}}.\] 
\end{lem}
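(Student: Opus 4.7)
The plan is to block-diagonalise $A$ using the simultaneous eigenbasis of $K$ and $\gamma$, reducing the spectral abscissa minimisation to a minimax problem over $n$ decoupled $2\times 2$ problems. Concretely, applying the similarity $T = \mathrm{diag}(S,S)$ (followed by an interleaving permutation of position and velocity coordinates) conjugates $A$ from \eqref{linearSDEcoeff2} into a block-diagonal matrix $\bigoplus_{j=1}^n A_j$ with
\[
A_j = \begin{pmatrix} 0 & 1 \\ -k_j & -\alpha g_j \end{pmatrix}, \qquad \alpha := \beta/\bar{\beta}.
\]
Thus the spectral abscissa of $A$ equals $f(\alpha) := \max_{j} f_j(\alpha)$, where each $f_j(\alpha)$ is the larger real part of the two roots of $\lambda^2 + \alpha g_j \lambda + k_j = 0$.

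Next, one explicitly computes that $f_j$ equals $-\alpha g_j/2$ on its underdamped branch $\alpha \le 2\sqrt{k_j}/g_j$, and $-\alpha g_j/2 + \sqrt{\alpha^2 g_j^2/4 - k_j}$ on its overdamped branch $\alpha \ge 2\sqrt{k_j}/g_j$; the former is strictly decreasing and the latter strictly increasing in $\alpha$, with common value $-\sqrt{k_j}$ at the transition. The minimax structure of $f$ therefore forces any optimum $\alpha^*$ to occur at a crossing where the underdamped contribution of block~$1$ (the block with smallest $g_j$, and hence dominant while all blocks are underdamped) meets the overdamped contribution from some other block $j$. Solving
\[
-\tfrac{\alpha g_1}{2} = -\tfrac{\alpha g_j}{2} + \sqrt{\tfrac{\alpha^2 g_j^2}{4} - k_j}
\]
for $\alpha$, and using the identity $g_j^2 - (g_j - g_1)^2 = g_1(2g_j - g_1)$, gives the candidate crossings $\alpha^{(j)} = 2\sqrt{k_j/(g_1(2g_j - g_1))}$; the proposed optimum is $\alpha^* := \min_j \alpha^{(j)}$, which matches the formula in the statement.

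To confirm optimality, I would first verify $f(\alpha^*) = -\alpha^* g_1/2$ and then bound $f$ from below on either side of $\alpha^*$. The inequalities $\alpha^* \le \alpha^{(1)} = 2\sqrt{k_1}/g_1$ and $\alpha^* \ge 2\sqrt{k_{j^*}}/g_{j^*}$ (the latter reducing algebraically to $(g_{j^*}-g_1)^2 \ge 0$, where $j^*$ attains the min) respectively place block~$1$ on its underdamped branch and block~$j^*$ on its overdamped branch at $\alpha^*$, each yielding the value $-\alpha^* g_1/2$. For $\alpha \le \alpha^*$, monotonicity of $f_1$ on its underdamped branch gives $f(\alpha) \ge f_1(\alpha) \ge -\alpha^* g_1/2$; for $\alpha \ge \alpha^*$, monotonicity of $f_{j^*}$ on its overdamped branch gives $f(\alpha) \ge f_{j^*}(\alpha) \ge f_{j^*}(\alpha^*) = -\alpha^* g_1/2$.

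The main obstacle is that, at the choice $\alpha^*$, no single block sits at its own individual optimum $2\sqrt{k_j}/g_j$, so the minimax optimality cannot be verified block by block. One must show, for every $j \notin \{1, j^*\}$, that $f_j(\alpha^*) \le -\alpha^* g_1/2$; after squaring (using that both sides are nonpositive in a controlled way), this reduces exactly to $(\alpha^*)^2 \le 4k_j/(g_1(2g_j - g_1))$, which is precisely the defining property of $j^*$ as the argmin. The algebraically delicate step is therefore the simultaneous book-keeping of which blocks are in the under- or overdamped regime at $\alpha^*$, together with the careful squaring of nonnegative quantities to translate the minimax condition into the explicit formula for $\alpha^*$.
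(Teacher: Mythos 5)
Your proof follows essentially the same route as the paper's: simultaneously diagonalise $K$ and $\gamma$, reduce to the scalar quadratics $\lambda^2+\alpha g_j\lambda+k_j=0$, observe that while all blocks are underdamped the spectral abscissa is $-\alpha g_1/2$ (decreasing), and that the optimum occurs at the first crossing with an overdamped branch $\alpha^{(j)}=\sqrt{4k_j/(g_j^2-(g_j-g_1)^2)}$. Your argument is in fact somewhat more careful than the paper's in explicitly verifying optimality (checking that all remaining blocks $j\notin\{1,j^*\}$ lie below $-\alpha^* g_1/2$, and bounding $f$ on either side of $\alpha^*$), where the paper merely asserts the ``first intersection'' determines the minimum.
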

\begin{proof}
Following the above considerations (see also \cite[Theorem 4.9]{ArnoldErb}) we are looking for the maximum real part of the eigenvalues of the drift matrix $A$ that will be minimised over $\alpha = \beta/\bar\beta$. To this end we  consider the eigenvalue problem for the matrix $A$
\[\begin{pmatrix} 0 & I_n \\ -K & - \alpha \gamma \end{pmatrix}  \begin{pmatrix} v_1 \\ v_2 \end{pmatrix} = \lambda \begin{pmatrix} v_1 \\ v_2 \end{pmatrix},\]
where $\alpha=\beta/\bar\beta$. Here $\lambda \neq 0 $ is an eigenvalue iff \[ v_1 = \lambda^{-1} v_2 \,, \quad -K v_2 = (\lambda^2 + \alpha \lambda \gamma) v_2\,.\] Rewriting the last equation in the basis given by the simultaneous diagonalization $S$ yields the following system of equations
\begin{align*}
- k_i = \lambda^2 + \lambda \alpha g_i\,, \qquad i=1,\ldots,n.
\end{align*}
Hence, for each $i$ we find a pair of eigenvalues   $\lambda_{i}^{1,2}(\alpha) = - \frac{\alpha g_i}{2} \pm \sqrt{ \frac{\alpha^2 g_i^2}{4} - k_i }\,.$ 
Since we are looking for the maximum real part, we only consider 
\begin{align*}
\lambda_i(\alpha) := \lambda_i^1(\alpha) = - \frac{\alpha g_i}{2} + \sqrt{ \frac{\alpha^2 g_i^2}{4} - k_i }\,.\end{align*}
For $\alpha \in (0,r)$, where $r = \min\limits_{i \in \left\{1,\ldots,n\right\}} \sqrt{\frac{4k_i}{g_i^2}}$ we have
\begin{align*}\min\limits_{i \in \left\{1,\ldots,n\right\}} {\rm Re}(\lambda_i(\alpha)) = {\rm Re}(\lambda_1(\alpha))= - \frac{\alpha g_1}{2} 
\end{align*}
which is decaying as a function of $\alpha$.
For $\alpha\geq r$ the discriminant of at least one eigenvalue $\lambda_j(\alpha)$ is positive and $ \lambda_j(\alpha)$ is thus monotonically increasing as a function of $\alpha$ leading to an intersection with the line $ - \frac{\alpha g_1}{2}$ at
 $\alpha_j  = \sqrt{\frac{4 k_j}{g_j^2 - (g_j - g_1)^2}}$. The minimum of the maximum real part of the eigenvalues of $A$ will thus be determined by the first intersection of the line $  - \frac{\alpha g_1}{2}$ with any other eigenvalue curve $\lambda_j(\alpha)$ , i.e.\ at
\begin{align*} \alpha^* = \min\limits_{j \in \left\{1,\ldots,n \right\}} \alpha_j = \min\limits_{j \in \left\{1,\ldots,n \right\}} \sqrt{\frac{4 k_j}{g_j^2 - (g_j - g_1)^2}}.
\end{align*}
Note that in case $r = \sqrt{4k_1/g_1^2}$ there is no intersection at $\alpha^ *=r = \argmin\limits_{\alpha} {\rm Re(\lambda_1(\alpha))}$, but ${\rm Re(\lambda_1(\alpha))}$ attains its global minimum at $\alpha^*$.
\end{proof}

\begin{rem}
The optimal temperature ratio (\ref{optRatio}) is a special case of Lemma \ref{lem:optRatio} for $n=1$. 
\end{rem}

\subsection{Fast sampling of a bistable system}

\begin{figure}
	\includegraphics[width=0.49\textwidth]{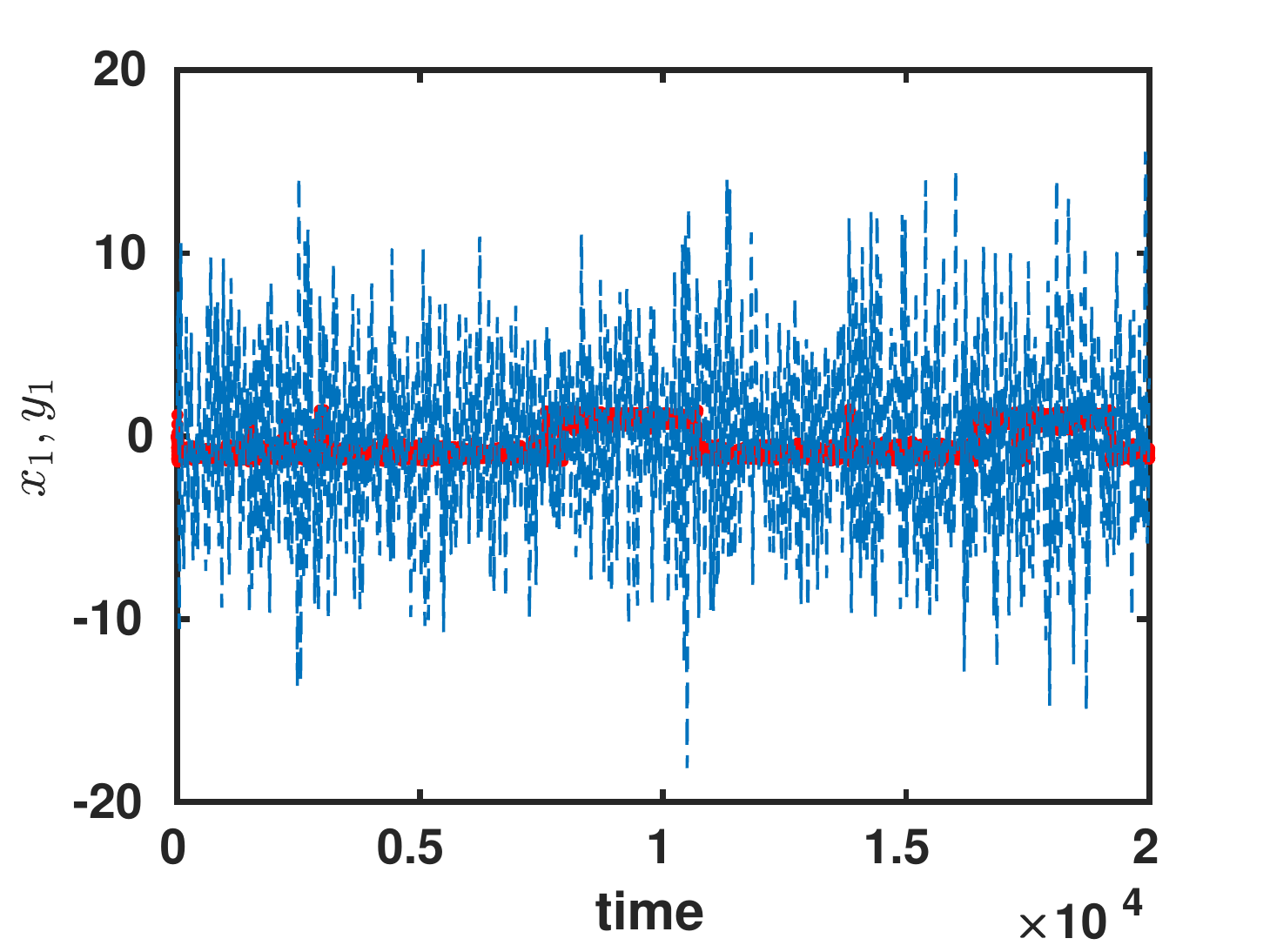}
	\includegraphics[width=0.49\textwidth]{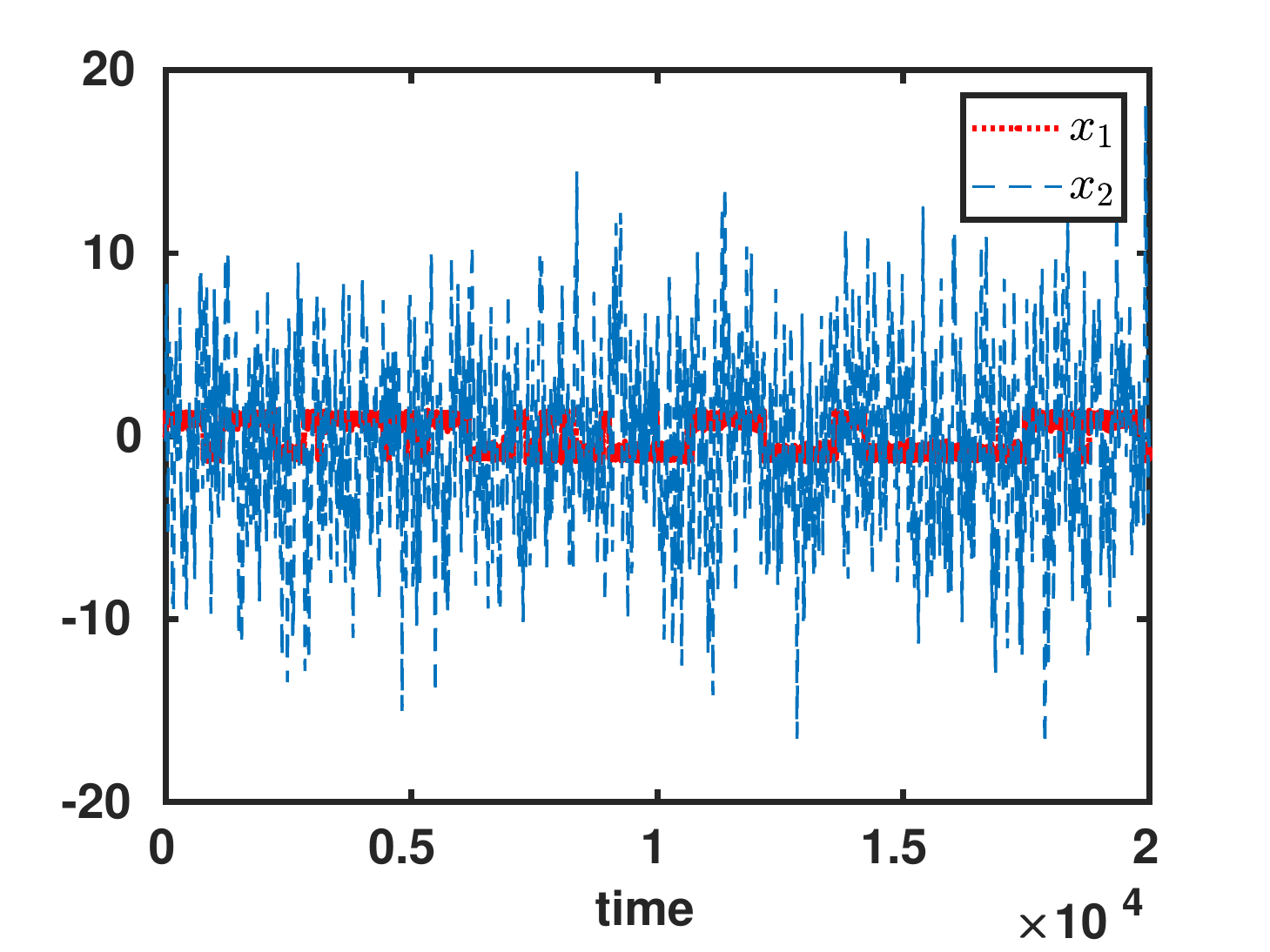}
	\caption{Two representative realisations of the controlled Langevin system at inverse temperature $\bar{\beta}=1.0$ (right panel) and the uncontrolled reference dynamics at target inverse temperature $\beta=5.0$ (left panel). The red curves show the metastable configuration $q$, the blue curves show the configuration  $\xi_1$.}\label{fig:trajectories}
\end{figure}

We now illustrate the previous considerations for linear systems with a nonlinear toy system at low friction. To this end, we consider the Langevin equation of dimension $n=d+1$ ($d=100$), with the potential 
\begin{align*}
V(x) = \frac{1}{2}(q^{2}-1)^{2} + \frac{k}{2}\sum_{i=1}^d (\xi_i-q)^{2}\,,\quad x=(q,\xi)\in\R\times\R^{d}\,,
\end{align*}
for a $k>0$. The friction coefficient $\gamma=\gamma^T>0$ is given by a tridiagonal matrix having 0.04 on the main diagonal and 0.02 above and below. In what follows we abbreviate the configuration components as $x=(q,\xi)$ and the velocities or momenta (strictly speaking) as $y=(u,\zeta)$. 

We run the controlled Langevin dynamics for $\bar{\beta}=1.0$ and compare with an uncontrolled simulation at target temperature $\beta=5.0$. Note that in this situation---other than in the linear case---no analytical result regarding the optimal temperature is available, so we run several numerical tests first, from which we determine a temperature ratio $\beta/\bar{\beta}\approx 5$ that shows good convergence towards the equilibrium distribution in the eyeball norm. 

For all numerical simulations, we use a variant of the BAOAB scheme with step size $\Delta t=5\cdot 10^{-3}$ (see \cite{baoab}), and we record the empirical $q$-marginals as well as the velocity autocorrelation function for the first coordinate after $4\,000\,000$ steps using a subsample of $20\,000$ points.  Figure \ref{fig:trajectories} shows a typical trajectory for the first and the third components $q$ and $\xi_{1}$. While $\xi_{1}$ is essentially Gaussian, we observe that the mixing rate between the metastable sets, i.e.\ the 2 wells of the double-well potential is much higher for the controlled simulation at $\bar{\beta}<\beta$ than for the uncontrolled simulation with $\bar{\beta}=\beta$; this is in accordance with the well converged empirical distribution shown in the left panel of Figure \ref{fig:comparison} that should be contrasted with the result of the uncontrolled simulation that has not yet converged. 

\paragraph*{Decay of correlations}

For further comparison, we also compute the velocity autocorrelation function for the  $q$-component, i.e.\ $U=dQ/dt$ where we use the shorthands $Q=X_1$ and $U=Y_1$. Since the Langevin equation is degenerate, with the noise driving only the $y$-components, the velocity autocorrelation exists. For all practical purposes the corresponding time series is weakly stationary with mean zero and variance $\vartheta^2>0$, and thus the velocity autocorrelation is unambigously defined by 
\begin{align*}
C_U(s) = \frac{\bE[U_tU_{t+s}]}{\vartheta^2} 
\end{align*}
where the expectation is understood over the (stationary) marginal distribution of the component $U_t$. Using ergodicity, we can approximate the autocorrelation function by 
\begin{align*}
C_U(s) \approx \frac{\displaystyle\int_0^T U_tU_{t+s}\,dt}{\displaystyle\int_0^T U_t^2\,dt}\,,
\end{align*}
for some sufficiently large $T\in(0,\infty)$, with the integrals being replaced by sums when discrete time series data are used (as is the case here).  

Interestingly enough, despite the much higher mixing rate in the controlled simulation due to the higher noise level, even the velocity autocorrelation functions of the two simulations for $\bar{\beta}<\beta$ and $\bar{\beta}=\beta$ are in good qualitative agreement (see right panel of Figure \ref{fig:comparison}). The fact that the autocorrelation function of the controlled simulation decays faster than for the uncontrolled simulation is in line with the observation in Section \ref{sec:limits} that the temperature ratio plays the role of a time scale separation parameter; cf.~equation (\ref{langevinSDEalt3}).

\begin{figure}
\includegraphics[width=0.49\textwidth]{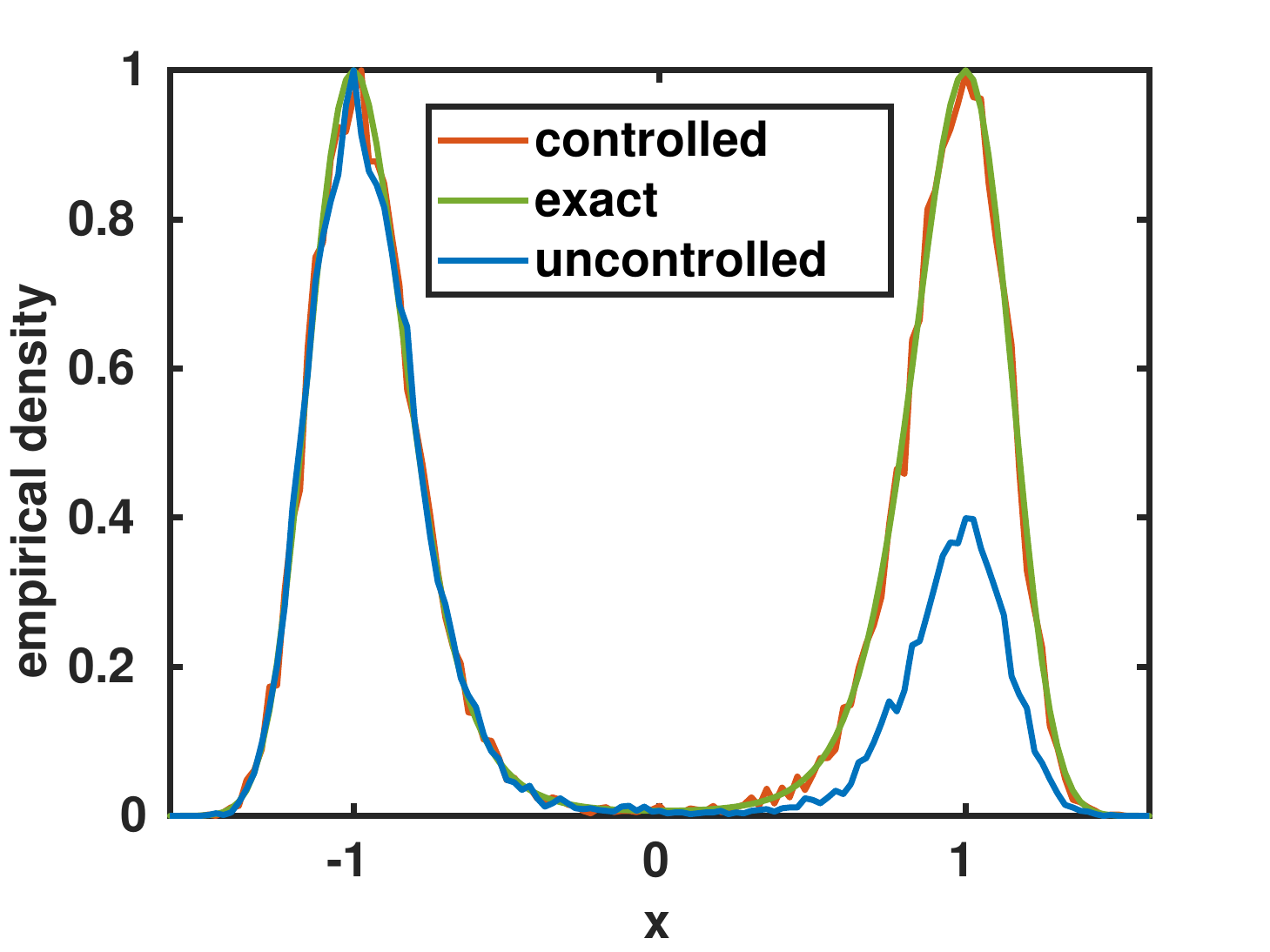}
\includegraphics[width=0.49\textwidth]{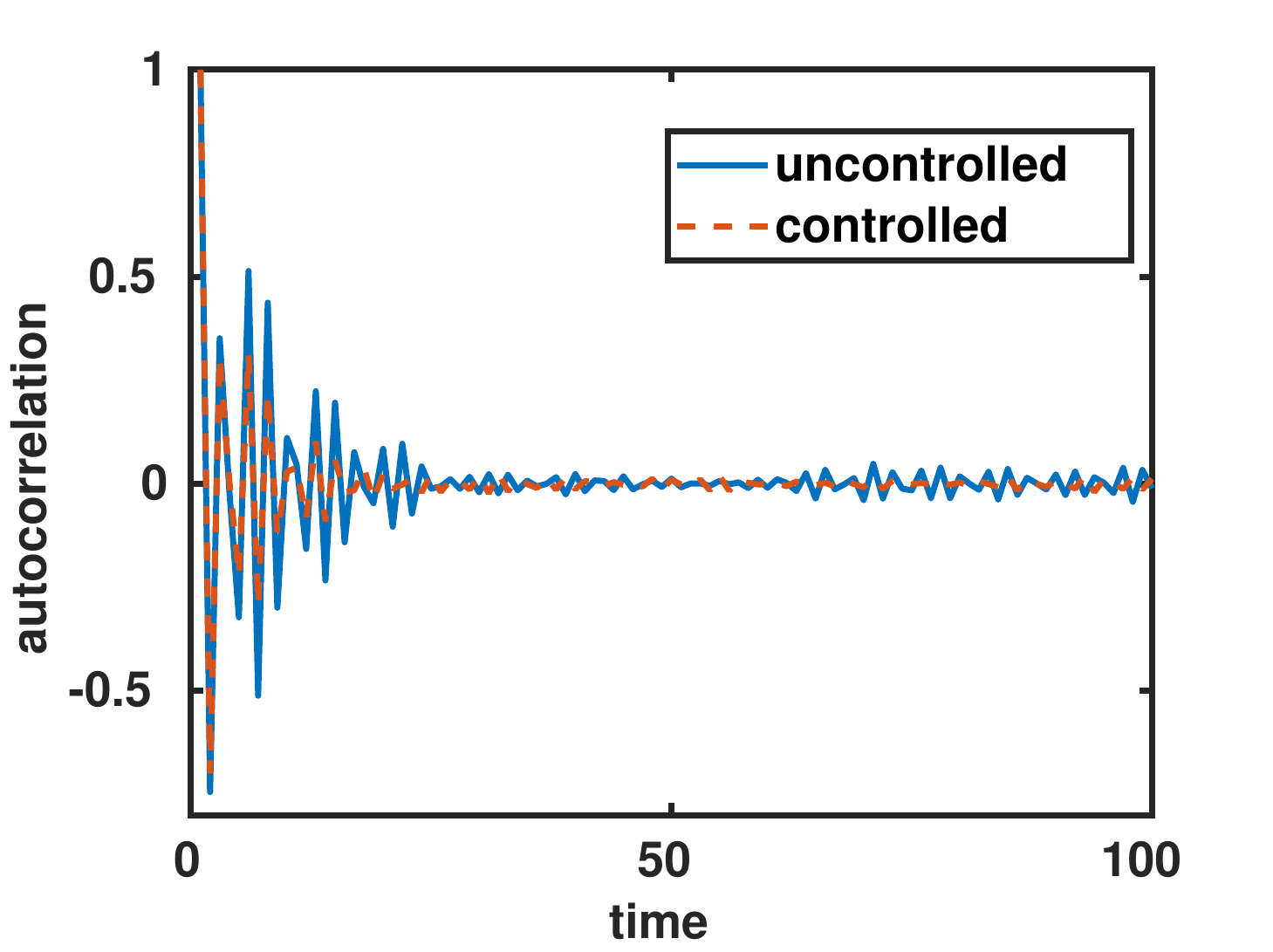}
\caption{Comparison of controlled and uncontrolled simulations: The left panel shows the resulting empirical $q$-marginals at $t=20\,000$, whereas the right panel shows the velocity autocorrelations of  $y_1=dq/dt$.}\label{fig:comparison}
\end{figure}

\subsection{Fast cooling and non-commutativity of limits}

Now we consider the interaction potential $V\colon\R^{n}\to \R$ of a Lennard-Jones cluster that consists of pairwise interaction terms  
\begin{equation*}
v_{ij} = 4\varepsilon\Bigl[\Bigl(\frac{\sigma}{r_{ij}}\Bigr)^{12} - \Bigl(\frac{\sigma}{r_{ij}}\Bigr)^{6}\Bigr]\,,
\end{equation*}
where $\varepsilon,\sigma>0$ are parameters and $r_{ij}=|x^{(i)}-x^{(j)}|$, $i\neq j$ denotes the distance between the $i$-th and the $j$-th particle with coordinates $x^{(i)},x^{(j)}\in\R^{d}$, respectively. The function $V$ is a sum of all pairwise interaction terms. 
 
Suppose we want to find the unique minimum energy configuration $x^{*}=(x^{(1)},\ldots,x^{(N)})\in \R^{n}$ by gradient descent in $V$ where $n=dN$ with $N$ denoting the number of particles and $d$ being the spatial dimension---typically $1,2$ or $3$. (Note that $x^{*}$ is unique up to translations and rotations, i.e.\ up to rigid body symmetries.)

Since the potential has many irrelevant local minima a gradient descent algorithm will get stuck in the local minimum that is closest to its starting value. The Langevin dynamics (\ref{langevinSDEalt2}) approximates a gradient descent as $\eps\to 0$ that gets stuck in local minima, but for every finite $\eps>0$, the dynamics has has a unique ergodic invariant measure $\mu_{\infty}^{\eps}$ with density
\begin{equation}\label{invMeasure2}
\rho^{\eps}_{\infty} = \frac{1}{Z^{\eps}}\exp\Bigl(-\frac{1}{\eps}\bar{H}\Bigr)\,,\quad Z^{\eps} = \int \exp\Bigl(-\frac{1}{\eps}\bar{H}(x,y)\Bigr)dxdy\,,
\end{equation}
where $ \bar{H}=\bar{\beta}H$ is independent of $\eps$. (The representation (\ref{invMeasure2}) readily follows from Lemma \ref{lem:invMeasure2} by rescaling the Hamiltonian.) By Laplace's principle, we have  (e.g., see \cite[Theorem 4.3.1]{DemboZeitouni})
\begin{equation}\label{laplace}
\lim_{\eps\to 0}\int \rho^{\eps}_{\infty}(x,y) f(x) \, dxdy  = f(x^{*})\quad f\in C_{b}(\R^{d}) \,.
\end{equation}
In other words the invariant measure $\mu_{\infty}^{\eps}$ concentrates at the unique global minimum $x^{*}=\argmin_{x}V(x)$ as $\eps\to 0$. Since, by Theorem \ref{thm:SGD}, the limit $\eps$ of the Langevin equation on any finite time interval $[0,T]$ converges to the deterministic gradient system that has the property to converge to the nearest local minimum as $t\to \infty$, it is a straight consequence that the limits $t\to\infty$ and $\eps\to 0$ in the two-temperature Langevin dynamics do not commute. 

We will exploit  this property to simulate the two-temperature Langevin dynamics (\ref{langevinSDEalt2}) at large, but finite temperature separation limit, so as to mimic a stochastic gradient descent that is likely to converge to a minimum that is close to the global one in the sense that $V(X_{T})\approx V(x^{*})$ whenever $T$ is sufficiently large.

\paragraph*{Cooling a Lennard-Jones system}

We seek the unique minimum energy configuration of a Lennard-Jones cluster of $N=7$ particles---or a close approximation thereof.  The idea now is to exploit the fact that, for finite $\eps$, the Langevin dynamics  (\ref{langevinSDEalt2}) is close to a stochastic gradient descent with momentum (like ADAM or its variants) that has the property of being absorbed by a very low minimum on $\cO(1)$ time scales and to remain there over $\cO(\exp(1/\eps))$ time scales. We expect that for sufficiently small $\eps>0$, the dynamics (\ref{langevinSDEalt2}) will essentially fluctuate around the global minimum $x^{*}$. 

As before we employ the BAOAB scheme with step size $\Delta t=5\cdot 10^{-3}$ and a total number of $2\,000\,000$ time steps, where we  record only every 200th step. The simulation temperature is fixed at $\bar{\beta}=1.0$, whereas the target temperature is varied between $\beta=\bar{\beta}$ and $\beta=\bar{\beta}/100$. Figure \ref{fig:LJ7} shows the results of two simulations for $\beta=1.0$ (left panel) and $\beta=0.01$ (right panel), both starting from the same high energy initial configuration. It can be seen that the simulation in the stochastic gradient descent regime for $\beta=0.01$ quickly goes down the energy ladder by roughly 11 to 12 orders of magnitude, similar to the uncontrolled equilibrium simulation, but then stabilises in a low-lying local minimum (see energy plot in the right panel of Figure \ref{fig:LJ7}). Since the target temperature is small compared to the typical energy barrier, the probability to escape from the attractor of the local minimum during simulation time is negligible. The uncontrolled simulation even reaches a yet lower energy state, but it oscillates wildly between local energy minima that span three orders of magnitude in energy, with a high escape probability (see energy plot in the left panel).

\begin{figure}
	\includegraphics[width=0.49\textwidth]{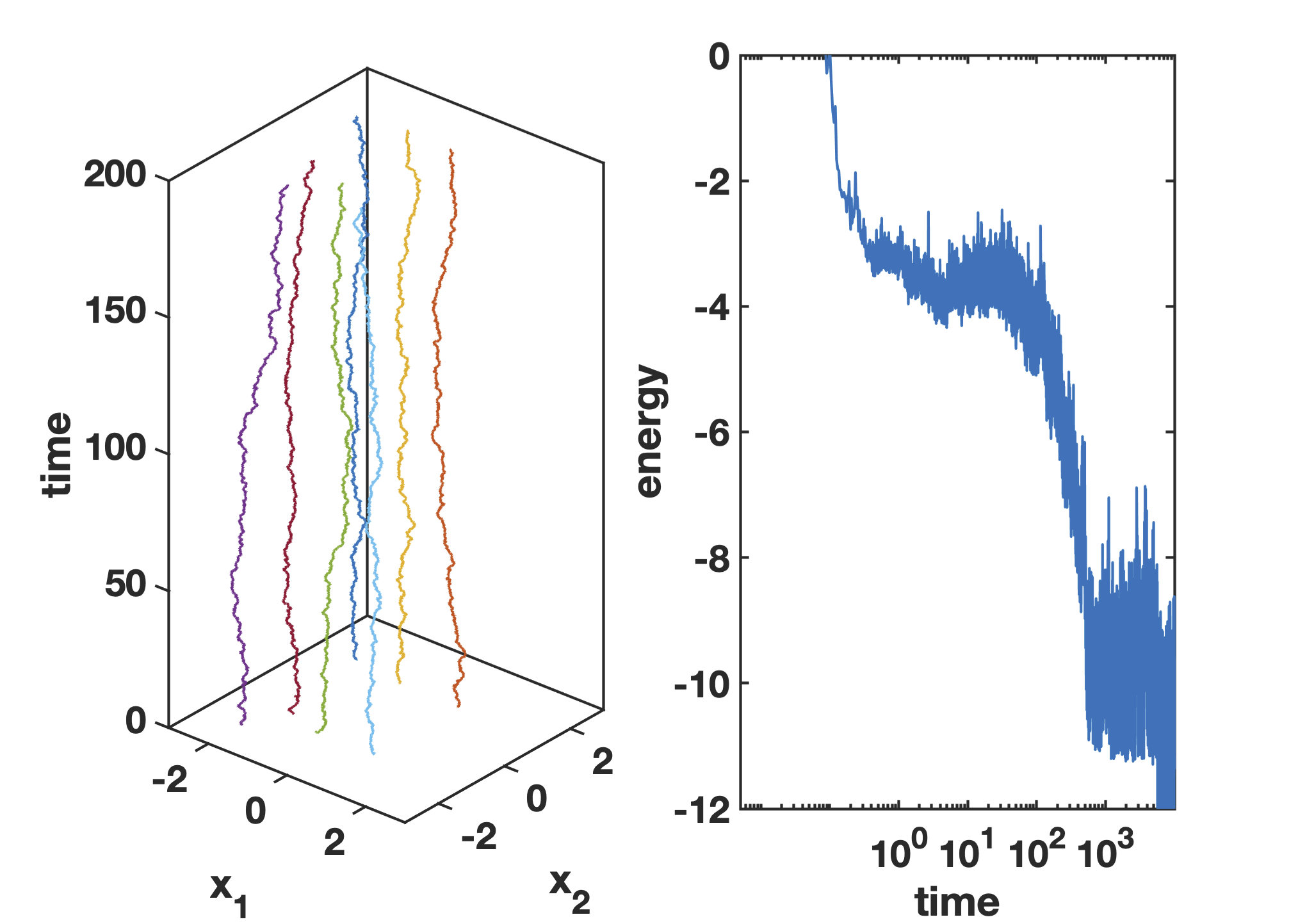}
	\includegraphics[width=0.49\textwidth]{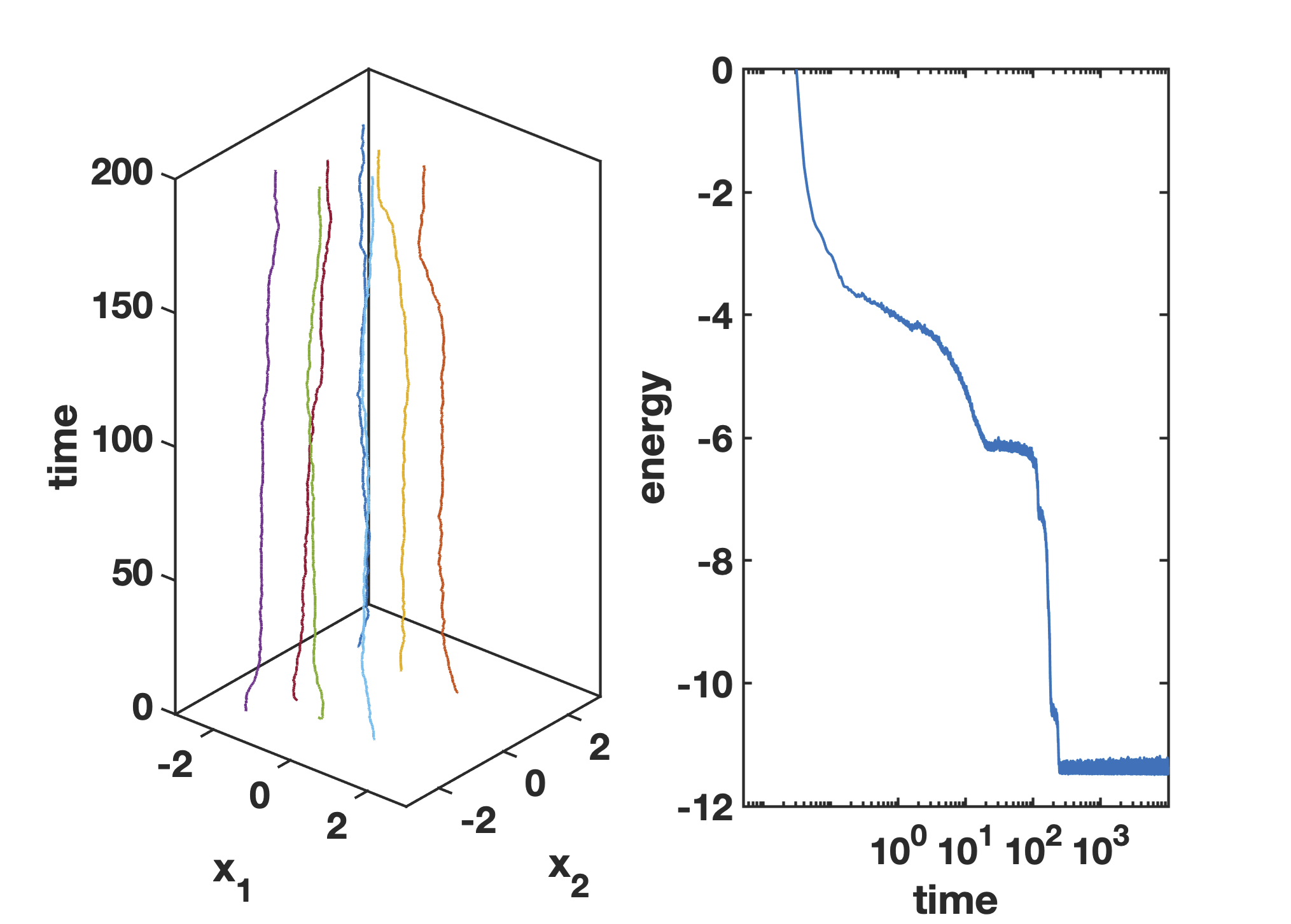}
\caption{Langevin simulation of a Lennard-Jones cluster at fixed simulation temperature $\bar{\beta}=1$: high friction regime (left panel) vs.~stochastic gradient descent regime at small target temperature (right panel)}\label{fig:LJ7}
\end{figure}

\section{Discussion}\label{sec:discussion}

We have studied the asymptotic properties of a feedback-controlled underdamped Langevin equation at high (simulation) temperature that is controlled by a linear feedback force. The feedback force acts as a friction and balances the noise so that the dynamics weakly converges to a thermal equilibrium at a lower (target) temperature. The rationale of choosing a simulation temperature that is higher than the target temperature is that the dynamics is less likely to get trapped in local minima of the Hamiltonian compared to simulating the system at the target temperature. The approach is reminiscent of the Schrödinger bridge problem on an infinite time horizon for diffusions with different noise coefficients. By using a functional inequality approach (FIR inequality) we have identified a control that enjoys a minimum dissipation property and that has been derived in \cite{Pavon2015} using different arguments. 

The fact that the simulation temperature can be different from the target temperature opens up various possibilities for designing sampling or global optimisation algorithms. For example, it is possible to ask what is the optimal temperature ratio that maximises the speed of the convergence to the desired target distribution. We have given only a partial answer to this question for Gaussian processes and collected further empirical evidence based on numerical simulations; cf. also \cite{Eberle2019} for a theoretical analysis. It should be mentioned that by choosing a too high simulation temperature, the speed of convergence can even be reduced as compared to the case when the simulation and the target temperatures are equal. On the other hand, if one is interested in exploring the minima of the underlying Hamiltonian, one could lower the target temperature, while holding the simulation temperature fixed, so that the invariant probability measure concentrates in the low-lying minima. We have numerically demonstrated with a benchmark example from molecular dynamics that the controlled Langevin dynamics can be used to efficiently solve high-dimensional and non-convex minimisation problems.  

From an algorithmic point of view, a relevant question then is whether it is possible to reach a (theoretically) arbitrary speed of convergence for sampling or optimisation by sending the temperature ratio to infinity. The answer is clearly negative as has been already pointed out in \cite{Duncan2017,Eberle2019}. More specifically, it turns out that the controlled dynamics converges either to the overdamped Langevin equation or to a deterministic gradient flow, depending on whether the target or the simulation temperature is held fixed. As a consequence, the ergodic limit and the large temperature separation limit do not commute in general, and the feedback-controlled Langevin dynamics interpolates between sampling and stochastic gradient descent. 

Future work shall address the question of optimising temperature ratios beyond the Gaussian setting or choosing more general feedback laws. The latter may be done by rephrasing the problem as an optimal control problem for the Fokker-Planck equation which relates this line of research to recent work on mean-field approaches for optimisation (e.g. \cite{Pinneau2017}).

\section*{Acknowledgement}

This work has been partially supported by the Collaborative Research Center \emph{Scaling Cascades in Complex Systems} (DFG-SFB 1114) through project A05 and by the MATH+ Cluster of Excellence (DFG-EXC 2046) through the projects EP4-4 and EF4-6. 

\appendix

\section{FIR inequality and minimum dissipation functional}\label{sec:FIR}

We want to study the density evolution of the controlled Langevin dynamics (\ref{langevinSDE}) and compare it to an uncontrolled simulation at the target temperature $1/\beta$: 
\begin{align}\label{langevinSDE2}
\begin{aligned}
dQ_{t} & = P_{t}\,dt\,,\quad Q_{0}=x\\
dP_{t} & = -\left(\nabla V(Q_{t}) + \gamma P_{t} \right)dt + \bar{\sigma} dW_{t}\,,\quad P_{0}=y\,,
\end{aligned}
\end{align}
with
\begin{align*}
\bar{\sigma}=\sqrt{\frac{\bar\beta}{\beta}}\,\sigma\,.
\end{align*}

\subsection{FIR inequality}

Obviously, the path measures of (\ref{langevinSDE}) and (\ref{langevinSDE2}) are mutually singular because the equations are not driven by the same Brownian motion, and therefore we cannot apply Girsanov's theorem to do a pathwise comparison. (The noise coefficients are different.)  Nevertheless it makes sense to compare the time $t$ marginals of the two equations. 
To this end, we write the Fokker-Planck equation associated with (\ref{langevinSDE}) as a continuity equation
\begin{equation}\label{langevinFPE}
\frac{\partial \rho}{\partial t} + \nabla\cdot\left(v\rho\right) = 0\,,
\end{equation}
with the time-dependent vector field
\begin{align*}
v_t = (J - \Gamma)\nabla H + U_t - \frac{1}{2}\Sigma\Sigma^\top \nabla\log\rho_t\,.
\end{align*}
We use the shorthand notation $\rho_t=\rho(\cdot,t)$ for the smooth probability density of $(X_t,Y_t)\in\R^n\times\R^n$ at time $t>0$ and  $U=(0,\sigma u)^\top\in\R^{n}\times \R^n$ to denote the control that is acting on the momentum equation only, and we have introduced the ($2n\times 2n$)-matrices
\begin{align*}
J = \begin{pmatrix}
0_{n} & I_{n}\\ -I_{n} & 0_{n}
\end{pmatrix}
,\quad 
\Gamma = \begin{pmatrix}
0_{n} & 0_{n}\\ 0_{n} & \gamma
\end{pmatrix},\quad 
\Sigma = \begin{pmatrix}
0_{n}\\ \sigma  
\end{pmatrix}.
\end{align*}
We further let 
\begin{equation}\label{langevinFPE2}
\frac{\partial \eta}{\partial t} + \nabla\cdot\left(\bar{v}\eta\right) = 0\,,
\end{equation}
with 
\begin{align*}
\bar{v}_t = (J - \Gamma)\nabla H - \frac{1}{2}\bar{\Sigma}\bar{\Sigma}^\top \nabla\log\eta_t\,.
\end{align*}
denote the Fokker-Planck equation associated with the reference Langevin dynamics (\ref{langevinSDE2}) at the target (inverse) temperature $\beta$ with $\bar\Sigma = \begin{pmatrix}
0_{n}\\ \bar\sigma  
\end{pmatrix}.
$ 

\begin{defn}[Relative entropy]\label{defn:kl}
	We define the relative entropy between $\rho_t$ and $\eta_t$ as
	\begin{equation}\label{KL}
	\KL(\eta_t|\rho_t) = \int_{\R^{2n}}\log\frac{\eta_{t}(z)}{\rho_{t}(z)}\,\eta_{t}(z)\,dz,
	\end{equation}
	provided that
	\begin{align*}
	\int_{\{\rho_t(z)=0\}} \eta_t(z)\,dz = 0\,,
	\end{align*}
	and otherwise we set $\KL(\eta_t|\rho_t)=\infty$.
\end{defn}
 
Assuming that $\eta_t$ is sufficiently decaying at infinity\footnote{Specifically, we assume that \begin{align*}
	\lim_{|z|\to\infty} v_t(z)\eta_t(z) = 0\,,\quad 	\lim_{|z|\to\infty} \bar{v}_t(z)\eta_t(z) = 0\,,\quad
	\lim_{|z|\to\infty}  \bar{v}_t(z)\eta_t(z)\log\left(\frac{\eta_t(z)}{\rho_t(z)}\right) = 0\,.
	\end{align*}} and that $\KL(\eta_t|\rho_t)<\infty$ for all $t>0$, it is straightforward to show that 
\begin{equation}\label{KLrate}
\frac{d}{dt} \KL(\eta_t|\rho_t) = \int_{\R^{2n}} \nabla\log\left(\frac{\eta_t}{\rho_t}\right)\cdot\left(\bar{v}_t-v_{t}\right)\eta_t\,dz\,,
\end{equation}
since (using a dot to denote the partial derivative \wrt $t$)
\begin{align*}
\frac{d}{dt} \KL(\eta_t|\rho_t) & = \int_{\R^{2n}} \left(\frac{\dot\eta_t}{\eta_t}-\frac{\dot\rho_t}{\rho_t}\right)\eta_t\,dz + \int_{\R^{2n}} \dot{\eta}_t\log\left(\frac{\eta_t}{\rho_t}\right)dz\\
 & = \int_{\R^{2n}} \left(-\nabla\cdot(\bar{v}\eta_t) + \frac{\eta_t}{\rho_t}\nabla\cdot(v\rho_t) - \log\left(\frac{\eta_t}{\rho_t}\right)\nabla\cdot(\bar{v}\eta_t)\right)dz\\
  & = \int_{\R^{2n}} \left(\nabla\log\left(\frac{\eta_t}{\rho_t}\right)\cdot(\bar{v}\eta_{t})-\nabla\left(\frac{\eta_t}{\rho_t}\right)\cdot(v\rho_t)\right)dz\\
  & = \int_{\R^{2n}} \nabla\log\left(\frac{\eta_t}{\rho_t}\right)\cdot\left(\bar{v}-v\right)\eta_t\,dz\,.
\end{align*}
Here we have substituted (\ref{langevinFPE}) and (\ref{langevinFPE2}) in the second equality and used integration by parts in the third equality, assuming fast decay of the density $\eta_t$ which allowed us to ignore boundary terms.  

\begin{thm}[FIR inequality, cf.~\cite{Sharma2016}]\label{thm:FIR}
	It holds, for all $\tau>0$, 
	\begin{equation}\label{FIR}
	\frac{d}{dt} \KL(\eta_t|\rho_t)\le  \frac{\tau - 1}{2}I(\eta_{t}|\rho_{t}) + \frac{1}{2\tau} S(\eta_{t})\,,
	\end{equation}
where 
\begin{equation}\label{FI}
I(\eta_t|\rho_t) = \int_{\R^{2n}} \Bigl|\nabla_{y}\log\Bigl(\frac{\eta_t}{\rho_t}\Bigr)\Bigr|_{\sigma\sigma^\top}^2\eta_t\,dz
\end{equation}
is the relative Fisher information, with $|z|_G=z^\top G z$ denoting the $G$-weighted Euclidean vector norm with a semidefinite weight matrix $G=\sigma\sigma^\top> 0$, and 
\begin{equation}\label{rate}
	S(\eta_t) = \int_{\R^{2n}}\Bigl|\sigma u  -  \frac{1}{2}\bigl(\sigma\sigma^\top-\bar{\sigma}\bar{\sigma}^\top \bigr)\nabla_{y}\log\eta_t\Bigr|_{(\sigma\sigma^{\top})^{-1}}^{2}\eta_t \,dz
\end{equation}
is a variant of the Donsker-Varadhan large deviations rate functional for the empirical measure generated by an SDE (e.g. see \cite[Theorem 2.5]{Duong2013}).
\end{thm}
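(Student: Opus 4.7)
The plan is to start from the identity (\ref{KLrate}) that has already been derived in the text, manipulate the drift difference $\bar v_t - v_t$ into a form that exposes the arguments of $I(\eta_t|\rho_t)$ and $S(\eta_t)$, and then apply a weighted Young inequality with parameter $\tau>0$.

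First I would observe that $\bar v_t$ and $v_t$ have the same Hamiltonian part $(J-\Gamma)\nabla H$, so their difference is purely in the $y$-component and equals
\[
(\bar v_t - v_t)|_y = -\sigma u - \tfrac{1}{2}\bar\sigma\bar\sigma^\top\nabla_y\log\eta_t + \tfrac{1}{2}\sigma\sigma^\top\nabla_y\log\rho_t.
\]
The key rewriting is to replace $\nabla_y\log\rho_t$ by $\nabla_y\log\eta_t - \nabla_y\log(\eta_t/\rho_t)$ and to set $w := \nabla_y\log(\eta_t/\rho_t)$. This yields
\[
(\bar v_t - v_t)|_y = -A - \tfrac{1}{2}\sigma\sigma^\top w, \qquad A := \sigma u - \tfrac{1}{2}\bigl(\sigma\sigma^\top - \bar\sigma\bar\sigma^\top\bigr)\nabla_y\log\eta_t,
\]
so that $A$ is precisely the vector whose weighted norm squared appears inside the rate functional $S(\eta_t)$ defined in (\ref{rate}), and the quadratic remainder supplies the natural dissipation in the $\sigma\sigma^\top$-metric.

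Substituting into (\ref{KLrate}) gives
\[
\tfrac{d}{dt}\KL(\eta_t|\rho_t) = -\int w\cdot A\,\eta_t\,dz \;-\; \tfrac{1}{2}\int|w|^2_{\sigma\sigma^\top}\eta_t\,dz.
\]
Now I would bound the cross term using the weighted Young inequality $|w\cdot A|\le \tfrac{\tau}{2}|w|^2_{\sigma\sigma^\top} + \tfrac{1}{2\tau}|A|^2_{(\sigma\sigma^\top)^{-1}}$, which is a direct consequence of Cauchy--Schwarz after the change of variables $u_1 := (\sigma\sigma^\top)^{1/2}w$, $u_2 := (\sigma\sigma^\top)^{-1/2}A$ (here the positive-definiteness of $\sigma\sigma^\top$ is used). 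Adding the resulting bound to the negative quadratic $-\tfrac{1}{2}\int|w|^2_{\sigma\sigma^\top}\eta_t\,dz$ produces exactly $\tfrac{\tau-1}{2}I(\eta_t|\rho_t) + \tfrac{1}{2\tau}S(\eta_t)$, which is the claimed inequality (\ref{FIR}).

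The main obstacle is bookkeeping rather than analysis: one has to make the deliberate choice of expressing everything in terms of $\nabla_y\log\eta_t$ (rather than $\nabla_y\log\rho_t$), because only then does the cross term reproduce the specific functional $S(\eta_t)$ with the $\tfrac{1}{2}(\sigma\sigma^\top-\bar\sigma\bar\sigma^\top)$ prefactor demanded by (\ref{rate}); symmetrically, the dissipative quadratic must come out weighted by $\sigma\sigma^\top$ (not $\bar\sigma\bar\sigma^\top$) to match the Fisher information in (\ref{FI}). The validity of the starting identity (\ref{KLrate}) requires the decay assumptions recorded in the footnote to discard boundary terms in the integration by parts, which the text has already secured.
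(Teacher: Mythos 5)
Your proof is correct and follows essentially the same route as the paper: starting from the entropy dissipation identity \eqref{KLrate}, adding and subtracting $\tfrac{1}{2}\sigma\sigma^\top\nabla_y\log\eta_t$ to expose the relative Fisher information $-\tfrac{1}{2}I(\eta_t|\rho_t)$ and the vector $A$ appearing in $S(\eta_t)$, and then closing with a $\tau$-weighted Young inequality in the $\sigma\sigma^\top$-metric. The only cosmetic difference is that the paper applies the integral Cauchy--Schwarz inequality on $L^2(\eta_t)$ followed by scalar Young, whereas you apply the pointwise weighted Young inequality and then integrate; these yield the same bound, and your $w$/$A$ bookkeeping is arguably cleaner.
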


\begin{proof}
We start from the representation (\ref{KLrate}). Substituting $v$ and $\bar{v}$ we obtain by adding a zero: 	
\begin{align*}
\frac{d}{dt} \KL(\eta_t|\rho_t) = & \int_{\R^{2n}} \nabla\log\Bigl(\frac{\eta_t}{\rho_t}\Bigr)\cdot\Bigl(-U -  \frac{1}{2}\bar{\Sigma}\bar{\Sigma}^\top \nabla\log\eta_t+ \frac{1}{2}\Sigma\Sigma^\top \nabla\log\rho_t \Bigr)\eta_t \,dz\\
 = &  - \frac{1}{2}\int_{\R^{2n}} \Bigl|\nabla\log\left(\frac{\eta_t}{\rho_t}\right)\Bigr|_{\Sigma\Sigma^\top}^2\eta_t\,dz\\ & +  \int_{\R^{2n}} \nabla\log\left(\frac{\eta_t}{\rho_t}\right)\cdot\Bigl(-U 
 +  \frac{1}{2}\bigl(\Sigma\Sigma^\top - \bar{\Sigma}\bar{\Sigma}^\top\bigr)\nabla\log\eta_t\Bigr)\eta_t \,dz\\
 = & -\frac{1}{2} I(\eta_t|\rho_t) +  \int_{\R^{2n}} \nabla_{y}\log\Bigl(\frac{\eta_t}{\rho_t}\Bigr)\cdot\Bigl(-\sigma u  +  \frac{1}{2}\bigl( \sigma\sigma^\top - \bar{\sigma}\bar{\sigma}^\top\bigr)\nabla_{y}\log\eta_t\Bigr)\eta_t \,dz
\end{align*}
where in the last line we have used that only the lower right block of the matrices $\Sigma,\bar{\Sigma}$ is nonzero. 
We set $\delta=\sigma\sigma^\top - \bar{\sigma}\bar{\sigma}^\top$ and consider the second integral. Using Cauchy-Schwarz and Young's inequality, we find \begin{align*}
I_{1}= & \int_{\R^{2n}} \nabla_{y}\log\Bigl(\frac{\eta_t}{\rho_t}\Bigr)\cdot\Bigl(-\sigma u  +  \frac{1}{2}\delta\nabla_{y}\log\eta_t\Bigr)\eta_t \,dz\\ 
 = & \int_{\R^{2n}} \nabla_y\Bigl(\frac{\eta_t}{\rho_t}\Bigr)\cdot\Bigl(-\sigma u  +  \frac{1}{2}\delta\nabla_y\log\eta_t\Bigr)\rho_t \,dz\\
= & \int_{\R^{2n}} \sigma\nabla_y\Bigl(\frac{\eta_t}{\rho_t}\Bigr)\frac{\rho_{t}}{\sqrt{\eta_{t}}}\cdot\sigma^{-1}\Bigl(-\sigma u  +  \frac{1}{2}\delta \nabla_y\log\eta_t\Bigr)\sqrt{\eta_{t}} \,dz\\
\le & \sqrt{\int_{\R^{2n}} \Bigl| \nabla_y\Bigl(\frac{\eta_t}{\rho_t}\Bigr)\Bigr|^{2}_{\sigma\sigma^{\top}}\frac{\rho_{t}^{2}}{\eta_{t}}\,dz}\sqrt{\int_{\R^{2n}}\Bigl|\sigma u  -  \frac{1}{2}\delta\nabla_y\log\eta_t\Bigr|_{(\sigma\sigma^{\top})^{-1}}^{2}\eta_t \,dz}\\
\le & \frac{\tau}{2}\int_{\R^{2n}} \Bigl| \nabla_y\Bigl(\frac{\eta_t}{\rho_t}\Bigr)\Bigr|^{2}_{\sigma\sigma^{\top}}\frac{\rho_{t}^{2}}{\eta_{t}}\,dz + \frac{1}{2\tau}\int_{\R^{2n}}\Bigl|\sigma u  -  \frac{1}{2}\delta\nabla_y\log\eta_t\Bigr|_{(\sigma\sigma^{\top})^{-1}}^{2}\eta_t \,dz
\end{align*}
for all $\tau>0$. Taking a closer look at the leftmost integral, we see that  
\begin{equation*}
I_{2} = \int_{\R^{2n}} \Bigl| \nabla_y\Bigl(\frac{\eta_t}{\rho_t}\Bigr)\Bigr|^{2}_{\sigma\sigma^{\top}}\frac{\rho_{t}^{2}}{\eta_{t}}\,dz 
= \int_{\R^{2n}} \Bigl| \nabla_y\log\Bigl(\frac{\eta_t}{\rho_t}\Bigr)\Bigr|^{2}_{\sigma\sigma^{\top}}\eta_{t}\,dz 
= I(\eta_{t}|\rho_{t})\,.
\end{equation*}
As a consequence, 
\begin{equation*}
\frac{d}{dt} \KL(\eta_t|\rho_t) \le \frac{\tau - 1}{2}I(\eta_{t}|\rho_{t}) + \frac{1}{2\tau} S(\eta_{t})
\end{equation*}

\end{proof}

\subsection{Entropy production rate}\label{sec:AEP}

In (\ref{FIR}) we may tighten the upper bound by minimising over $\tau>0$. From a practical point of view, however, it is more convenient to set $\tau=1$, as a consequence of which the Fisher information term vanishes.  

We now want to compute the entropy production of the controlled system. To this end, we set $\eta_{0}=\rho_{\infty}$ which implies that $\eta_{t}=\rho_{\infty}$ for all $t>0$. Using that $\delta=(1-\bar{\beta}/\beta)\sigma\sigma^{\top}$ and integrating (\ref{FIR}) we obtain the inequality  
\begin{equation}\label{integratedFIR}
\begin{aligned}
\KL(\rho_\infty|\rho_{T}) - \KL(\rho_{\infty}|\rho_{0}) & \le \frac{1}{2}\int_{0}^{T} \bE\!\left[\Bigl|\sigma u  -  \frac{1}{2}\delta\nabla_y\log\rho_\infty\Bigr|_{(\sigma\sigma^{\top})^{-1}}^{2}\right] dt\,,\\
& = \frac{1}{2}\int_{0}^{T} \bE\!\left[\Bigl|\sigma u  -  \frac{1}{2}(\bar{\beta}-\beta)\sigma\sigma^{\top} y\Bigr|_{(\sigma\sigma^{\top})^{-1}}^{2}\right] dt\,,
\end{aligned}
\end{equation}
where the expectation is taken over the invariant density $\rho_{\infty}\propto\exp(-\beta H)$. The term inside the integral is the (asymptotic) entropy production rate, and we define 
\begin{align}\label{AEP}
R(u)  = \frac{1}{2}\,\bE\!\left[\Bigl|\sigma u  -  \frac{1}{2}(\bar{\beta}-\beta)\sigma\sigma^{\top} y\Bigr|_{(\sigma\sigma^{\top})^{-1}}^{2}\right]\,.
\end{align}
Note that when $u$ is admissible, then $R(u)$ shows up as asymptotic entropy production rate as $T\to\infty$, even without the assumption $\eta_{0}=\rho_{\infty}$, because the law $\eta_{t}$ of the uncontrolled dynamics (\ref{langevinSDE2}) asymptotically approaches the target density $\rho_{\infty}$ as $t\to\infty$, independently of the initial distribution. Hence the name \emph{asymptotic entropy production rate}.

\section{Proof of Theorem~\ref{thm:SGD}}\label{app:SGD}

The proof below is an adaption of~\cite[Prop. 2.14]{mathias2010free} for a different scaling.
Since~\eqref{langevinSDEalt2Y} is a non-homogenous linear equation in $Y$, we have the explicit solution
\begin{equation*}
Y_t = e^{-\frac{t}{\eps^2}\gamma  }y - \frac{1}{\eps}\int_0^t e^{-\frac{t-s}{\eps^2} \gamma } \nabla V(X_s)ds +\frac{1}{\sqrt{\eps}}\int_0^te^{-\frac{t-s}{\eps^2} \gamma }\sigma  dW_s .
\end{equation*}
Integration of \eqref{langevinSDEalt2X} thus yields
\begin{align*}
&X_t = x + \frac{1}{\eps} \int_0^t Y_s ds \\
&= x + \frac{1}{\eps}\int_0^t  e^{-\frac{s}{\eps^2}\gamma  }y ds - \frac{1}{\eps^2}\int_0^t \int_0^s e^{-\frac{s-r}{\eps^2} \gamma } \nabla V(X_r)dr ds +\frac{1}{\eps^{3/2}}\int_0^t \int_0^s e^{-\frac{s-r}{\eps^2} \gamma } \sigma  dW_r ds \\
&= x + \eps \gamma^{-1} (I_n - e^{-\frac{t}{\eps^2}\gamma}) y
- \frac{1}{\eps^2}\int_0^t \int_r^t e^{-\frac{s-r}{\eps^2} \gamma }ds \nabla V(X_r) dr +\frac{1}{\eps^{3/2}}\int_0^t \int_r^t e^{-\frac{s-r}{\eps^2} \gamma }  \sigma ds dW_r  \\
&=  x + \eps \gamma^{-1} (I_n - e^{-\frac{t}{\eps^2}\gamma})  y
- \int_0^t \gamma^{-1}(I_n - e^{-\frac{t-r}{\eps^2} \gamma }) \nabla V(X_r) dr 
+\sqrt{\eps}\int_0^t   \gamma^{-1}(I_n - e^{-\frac{t-r}{\eps^2} \gamma })\sigma dW_r. 
\end{align*}
Together with ~\eqref{SGD} this gives 
\begin{align*}
    X_t - x_t &= \eps\gamma^{-1} (I_n - e^{- \frac{t}{\eps^2} \gamma})y 
    - \int_0^t \gamma^{-1}(I_n - e^{-\frac{t-r}{\eps^2} \gamma }) (\nabla V(X_r) - \nabla V(x_r))dr \\
    &\quad +\int_0^t \gamma^{-1} e^{-\frac{t-r}{\eps^2}\gamma} \nabla V(x_r) dr 
    +\sqrt{\eps}\int_0^t \gamma^{-1}(I_n - e^{-\frac{t-r}{\eps^2}\gamma})\sigma dW_r .
\end{align*}
Let us consider these terms separately and let $t \in [0,T]$. Also note that $|I_n - e^{-\frac{t}{\eps^2}\gamma}|_F = \sqrt{\sum_{i \leq n} (1-e^{-\frac{t}{\eps^2}\lambda_i})^2} \leq \sqrt{n}$ for $t> 0$, where $\lambda_i,\ i=1,\ldots,n$ are the eigenvalues of $\gamma = \gamma^T >0$, and $|A|_F^2 = tr(A^TA)$. For the first term it holds that 
\begin{equation*}
    |\eps\gamma^{-1} (I_n - e^{- \frac{t}{\eps^2} \gamma})y |
    \leq \eps \sqrt{n} |\gamma^{-1}|_F |y| .
\end{equation*}
Using the Lipschitz continuity of $\nabla V$ the second term can be bounded by
\begin{align*}
    \Bigl|\int_0^t \gamma^{-1}(I_n - e^{-\frac{t-r}{\eps^2} \gamma }) (\nabla V(X_r) - \nabla V(x_r))dr \Bigr|
    &\leq \sqrt{n} |\gamma^{-1}|_F L_V \int_0^t |X_r -x_r| dr .
\end{align*}
For the third term we use, denoting the eigenvalues of $\gamma$ by $\lambda_i$, that \[|e^{-\frac{t-r}{\eps^2}\gamma}|_F = \Bigl( \sum_{i \leq n} e^{-2 \frac{t-r}{\eps^2} \lambda_i}\Bigr)^{\tfrac12} \leq \sum_{i \leq n} e^{- \frac{t-r}{\eps^2}\lambda_i} \]
which gives
\begin{align*}
    \Bigl|\int_0^t \gamma^{-1} e^{-\frac{t-r}{\eps^2}\gamma} \nabla V(x_r) dr \Bigr|
    &\leq \eps^2 |\gamma^{-1}|_F|\gamma^{-1/2}|_F^2 \max\limits_{r \in [0,t]}|\nabla V(x_r)| ,
\end{align*}
where $|\gamma^{-1/2}|_F^2 = \sum_{i\leq n} 1/\lambda_i$.
Note that $\max\limits_{r \in [0,t]}|\nabla V(x_r)| $ takes a deterministic finite value  since $\nabla V(\cdot)$ and $x_{\cdot}$ are both continuous and hence attain their minimum and maximum on compact intervals.\\
The last term can be bounded using integration by parts as
\begin{align*}
    \Bigl|\sqrt{\eps}\int_0^t   \gamma^{-1}(I_n - e^{-\frac{t-r}{\eps^2} \gamma })\sigma dW_r \Bigr|
    & \leq\sqrt{\eps} \int_0^t \frac{1}{\eps^2}  \bigl|e^{-\frac{t-r}{\eps^2} \gamma }\sigma \bigr|_F \bigl| W_r \bigr|  dr .
\end{align*} 
Since $W_r$ is almost surely continuous, we have that $|W_r| \leq K < \infty$ almost surely so that evaluating the integral $\int_0^t |e^{-\frac{t-r}{\eps^2} \gamma }|_F dr$ yields the almost sure bound 
\begin{align*}
  \Bigl|\sqrt{\eps}\int_0^t  \gamma^{-1}(I_n - e^{-\frac{t-r}{\eps^2} \gamma }) \sigma dW_r \Bigr|
    &\leq   \sqrt{\eps}  |\sigma|_F |\gamma^{-1/2}|_F^2 K .
\end{align*}
Putting everything together gives
\begin{align*}
   \sup_{t \in [0,T]} |X_t - x_t| \leq \sqrt{n} |\gamma^{-1}|_F L_V \int_0^T \sup_{s \in [0,r]}|X_s -x_s| dr + \sqrt{\eps}\, C\,,
\end{align*}
where $C= |\sigma|_F |\gamma^{-1/2}|_F^2 K + \sqrt{\eps}  |\gamma^{-1}|_F \bigl(\sqrt{n}|y| + \eps |\gamma^{-1/2}|_F^2  \max\limits_{r \in [0,T]}|\nabla V(x_r)|\bigr)$ 
which, using Gronwall's inequality, yields the assertion.  The second assertion follows by the modified estimate
\begin{align*}
 \bE\Bigl( \sup\limits_{t \in [0,T]} \Bigl|\sqrt{\eps}\int_0^t  \gamma^{-1}(I_n - e^{-\frac{(t-r)}{\eps^2} \gamma }) \sigma  dW_r \Bigr|^2 \Bigr)
    &\leq  4 \eps T n |\sigma|_F^2 | \gamma^{-1}|_F^2
\end{align*}
for the last term, for which we use Doob's inequality and It\^{o}'s isometry. This, together with the previously derived bounds, Young's inequality, and finally applying Gronwall's inequality gives
\begin{align*}
   \bE\Bigl(\sup_{t \in [0,T]} |X_t - x_t|^2 \Bigr) \leq \eps C_1 e^{C_2} \,,
\end{align*}
where $C_1 =  4  |\gamma^{-1}|_F^2 \bigl[4 Tn |\sigma |_F^2 + \eps^3 |\gamma^{-1/2}|_F^4 \bigl(\max\limits_{t \in [0,T]} |\nabla V(x_t)|\bigr)^2 + \eps  n |y|^2\bigr]$ and $C_2 = 4n |\gamma^{-1}|_F^2 L_V^2 T$.

\bibliographystyle{plain}
\bibliography{langevinHighLow}

\end{document}